\keywords{Realizability, Axiom of Choice, Church's thesis, Point-free topology, constructive set theory}
\newcommand{\mlu}{\mbox{\bf MLtt$_1$}}
\newcommand{\I}{\mathsf{I}}
\newcommand{\CoI}{\mathsf{Co}\mbox{-}\mathsf{I}}
\newcommand{\step}[3]{\frac{#1}{#2}\; #3}
\newcommand{\CZF}{\mathbf{CZF}}
\newcommand{\wINAC}{\mathbf{wINACC}}
\newcommand{\wINACC}{\mathbf{wINACC}}
\newcommand{\MRboldsymbol}{\mathbf}
\newcommand{\MRmathbb}{\mathbb}
\newcommand{\MRMLS}{{\MRmathbb{MLS}}^*}
\newcommand{\MRvoll}[2]{{\mathbf{mv}}(\phantom{}^{#1}{#2})}
\newcommand{\MRpaar}[1]{\langle #1\rangle}
\newcommand{\MRhoch}[2]{\phantom{}^{#1}{#2}}
\newcommand{\MRCZF}{\ensuremath{\mathbf{CZF}}}
\newcommand{\MRREA}{\ensuremath{\mathbf{REA}}}
\newcommand{\MRZFC}{\ensuremath{\mathbf{ZFC}}}
\newcommand{\MRinn}{\!\in\!}
\newcommand{\rft}{\widetilde{\mathsf{rf}}}
\newcommand{\trt}{\widetilde{\mathsf{tr}}}
\newcommand{\sigmat}{\widetilde{\Sigma}}
\newcommand{\pit}{\widetilde{\Pi}}
\newcommand{\pp}{\mathfrak p}
\newcommand{\mfcind}{\mbox{\bf MF$_{cind}$}}
\newcommand{\mlsi}{\mbox{$\mathbb{MLS}_{ind}$}}
\newcommand{\mluif}{\mbox{\bf MLtt$_{ind}$}}
\newcommand{\mlucf}{\mbox{\bf MLtt$_{cind}$}}
\newcommand{\mtt}{\mbox{{\bf mTT}}}
\newcommand{\mttind}{{\bf mTT}$_{ind}$}
\newcommand{\mttcind}{{\bf mTT$_{cind}$}}
\newcommand{\emtt}{\mbox{{\bf emTT}}}
\newcommand{\emttind}{\mbox{{\bf emTT$_{ind}$}}}
\newcommand{\emttcind}{\mbox{{\bf emTT$_{cind}$}}}
\newcommand{\ct}{\mbox{\bf CT}}
\newcommand{\ac}{\mbox{\bf AC}}
\newcommand{\czfrea}{{\bf CZF+REA}}
\newcommand{\iduh}{$\widehat{ID}_1$}
\newcommand{\mf}{{\bf MF}}
\newcommand{\mfind}{{\bf MF$_{ind}$}}
\def\cov{\lhd} 
\begin{document}
\title[Topological Generation with Church's thesis and the Axiom of Choice]{Inductive and Coinductive Topological Generation with Church's thesis and the Axiom of Choice}
  
  \author[M.~E.~Maietti]{Maria Emilia Maietti\rsuper{a}}

\author[S.~Maschio]{Samuele Maschio\rsuper{a}}

\author[M.~Rathjen]{Michael Rathjen\rsuper{b}}

\address{Dipartimento di Matematica ``Tullio Levi Civita'',
Universit\`a di Padova, Italy}
\email{maietti@math.unipd.it, maschio@math.unipd.it}
\thanks{}	
\address{School of Mathematics,
University of Leeds, UK}
\email{M.Rathjen@leeds.ac.uk}

\thanks{Projects EU-MSCA-RISE project 731143 ``Computing with Infinite Data'' (CID), MIUR-PRIN 2010-2011 provided
support for the research presented in the paper.}

\maketitle

\begin{abstract}
In this work we consider an extension \mfcind\  of   the  Minimalist Foundation {\bf MF}  for predicative constructive mathematics 
with the addition of inductive and coinductive topological definitions sufficient to generate Sambin's Positive topologies, namely
Martin-L{\"o}f- Sambin formal topologies (used to describe pointfree formal open subsets)  equipped with a Positivity relation (used to describe pointfree formal closed subsets). In particular the intensional level of \mfcind, called \mttcind,  is  defined by extending  with coinductive topological definitions another theory  \mttind\
extending the intensional level \mtt\ of \mf\ with    the sole addition of  inductive topological definitions. In previous work we have shown that \mttind\
is consistent with adding simultaneously  formal Church's Thesis, {\bf CT}, and the Axiom of Choice, {\bf AC},  via an interpretation in
Aczel's {\bf CZF +REA}, namely  Constructive Zermelo-Fraenkel Set Theory with the Regular Extension Axiom. Our aim is to show   the expectation that the  addition of   coinductive topological definitions to \mttind\   does not  increase its  consistency strength
 by reducing  the consistency of  \mttcind{\bf +CT+AC} to the consistency of  {\bf CZF +REA} through various interpretations. We  reach our goal  in two ways.
 One way consists in first interpreting  \mttcind{\bf +CT+AC} 
in the theory extending  {\bf CZF} with the {\bf Union Regular Extension Axiom}, $\mathbf{REA}_{\bigcup}$, a strengthening of {\bf REA},  and  the Axiom of Relativized Dependent Choice, $\mathbf{RDC}$. 
The theory $\mathbf{CZF}+\mathbf{REA}_{\bigcup}+\mathbf{RDC}$ is then interpreted in $\mathbb{MLS}^\ast$, a version of Martin-L{\"o}f's type theory with Palmgren's superuniverse
$\mathbb S$.
The last step consists in interpreting $\mathbb{MLS^\ast}$ back into  {\bf CZF +REA}.
The alternative way consists in first interpreting  \mttcind{$\bf +\ac +\ct$}  directly  in a version of Martin-L{\"o}f's type theory with Palmgren's superuniverse extended with $ \ct$,
which is  then interpreted back to  {\bf CZF +REA}.
A key benefit of the  first way is that the theory $\mathbf{CZF}+\mathbf{REA}_{\bigcup}+\mathbf{RDC}$ also supports the intended set-theoretic interpretation of the extensional level of \mfcind.
Finally, all the theories considered to reach our goals,  except  \mttcind{$\bf +\ac +\ct$},  are shown to be of the same proof-theoretic strength. 
\end{abstract}

\section{Introduction}
This paper extends results from previous papers~\cite{IMMS,mmr21}  and provides a further contribution to the  project of establishing  what extensions of the  intensional level of the Minimalist Foundation in~\cite{m09}  are consistent with Formal Church's thesis and the Axiom of Choice.

The Minimalist Foundation, for short \mf,  was first ideated in~\cite{mtt} and completed in~\cite{m09}. It was introduced to serve as a constructive foundation compatible with the most relevant constructive
and classical foundations.

The reason is  that still today no foundation for constructive mathematics has affirmed itself as the standard reference  as  it happened with  Zermelo-Fraenkel axiomatic set theory for classical mathematics.
The many existing  foundations for constructive mathematics  all adopt  intuitionistic logic
for their logical reasoning   but they differ in the  accepted set-theoretic principles.  For example, their consistency status  with principles such as  the formal Church's thesis $\ct$ and the Axiom of Choice
 may be very different. Examples vary from  Heyting arithmetic in all finite types, which is constructively consistent with  both principles simultaneously (cf.~\cite{DT88}),  to foundations for Brouwer's intuitionistic mathematics in~\cite{DT88} which are inconsistent with $\ct$,  to  foundations like Aczel's Constructive Set Theory, for short {\bf CZF},  which
is constructively consistent with $\ct$ together with some restricted forms of the Axiom of Choice (see~\cite[9.2,10.1]{rathjen2006b}), whereas the combination of 
$\ct$ and the full axiom of choice together with {\bf CZF} gives rise to inconsistency.

Moreover, another peculiarity of the Minimalist Foundation \mf\  in~\cite{m09}  is that of being  a two-level foundation, as required in~\cite{mtt}. Indeed, \mf\ is
equipped with an extensional level, called \emtt, formulated in a language close to that of  everyday mathematical practice and
interpreted via a quotient model  in a further intensional level, called \mtt,  designed as a type-theoretic base for a proof-assistant.

 The notion of two-level  constructive foundation was  introduced in~\cite{mtt}, motivated by the need to founding  constructive mathematics on a theory consistent  with the axiom of choice and Church's thesis $\ct$ (via an extension of Kleene realizability interpretation of intuitionistic arithmetics) including a way to formalize usual
 desirable extensional  set-theoretic principles, such as extensional equality of functions, which are inconsistent
with the axiom of choice and $\ct$   (see also~\cite{DT88}). 

Only recently, in~\cite{IMMS},  it was shown that the intensional level \mtt\ of \mf\  is consistent with the axiom of choice $\ac$ and the formal Church's thesis $\ct$, 
expressed as follows:
 $\ac$ states
 that  from any total relation
we can extract a type-theoretic function
\[ ({\bf AC})\;(\forall x\in A)\,(\exists y\in B)\,R(x,y)\rightarrow (\exists f\in (\Pi x\in A)\,B)\,(\forall x\in A)\,R(x,\mathsf{Ap}(f,x))\]
with $A$ and $B$ generic collections and $R(x,y)$ any relation, while
$\ct$ (see also~\cite{DT88}) states that from any total relation on natural numbers
we can extract a (code of a) recursive function by using the Kleene predicate $T$ and the extracting function $U$
 \[({\bf CT})\;(\forall f\in \mathsf{N}\rightarrow\mathsf{N})(\exists e\in \mathsf{N})\,(\forall x\in \mathsf{N})\,(\exists z\in \mathsf{N})\,(T(e,x,z)\wedge \mathsf{Ap}(f,x)=_\mathsf{N}U(z)).\]
Furthermore,  the whole \mf\  turned out to be not only constructive but also predicative in the strict sense of Feferman, as first shown in~\cite{ms16}.


Another example of two-level foundation   is the theory \mfind\ proposed in~\cite{mmr21}. \mfind\ extends \mf\  with all the inductive definitions necessary to formalize  the inductive  topological methods  developed in 
~\cite{CSSV03,cms13}  within Formal Topology which is  a predicative and constructive approach to topology  put forward by P. Martin-L{\"o}f and G. Sambin 
in~\cite{S87}.  Indeed, it was expected that \mf\ is not able to formalize all such inductive methods, due to its minimality.

In particular, in~\cite{mmr21} we showed that the intensional level \mttind\ of \mfind\   is consistent with $\ac$ and $\ct$ by extending the realizability semantics  used in~\cite{IMMS,R93,RG94,antirat}.  A major improvement of the realizability semantics
 in~\cite{mmr21}  was its formalization in a constructive theory such as the (generalized) predicative  set theory  \czfrea,
 namely Aczel's constructive Zermelo-Fraenkel set theory extended with the regular extension axiom  {\bf REA} (see~\cite{aczel86}). Instead, the semantics in~\cite{IMMS} was formalized  in Feferman's predicative theory of non-iterative fixpoints \iduh\
 which is governed by classical logic.  Both semantics extend Kleene realizability semantics of intuitionistic arithmetic and hence they validate  $\ct$ and $\ac$.

This paper aims to further extend \mfind\  by  also including the coinductive topological methods  introduced  by  P. Martin-L{\"o}f and G. Sambin in~\cite{somepoints,postop}
 to describe both open and closed subsets of a point-free topology,  later developed as Positive Topology in~\cite{somepoints}.
 Instances of such  inductive and coinductive definitions  seem necessary  to provide   a constructive and predicative version  of results like
  the coreflection of the category of locales in that of open locales  in~\cite{mv04},  or the embedding of the category of locales in that of positive topologies in~\cite{emb}.

  To meet our purpose, here  we build a two-level extension  \mfcind\  of \mfind, by enriching both levels of \mfind\   with Martin-L{\"o}f and Sambin's  coinductive definitions necessary to generate Sambin's Basic Topologies in~\cite{somepoints, postop},  namely
 inductively generated basic covers (used to describe pointfree formal open subsets)  enriched with a positivity relation (used to describe pointfree formal closed subsets),

Then, we  prove two main results concerning  \mfcind. First, we show that \mfcind\ is a two-level theory in the sense of~\cite{mtt} by showing that its intensional level, called \mttcind, 
  is consistent with $\ac$ and $\ct$.  Second,
   we  show   the expectation that the  addition of   coinductive topological definitions to \mttind\    does not  increase its  consistency strength
 by reducing  the consistency of  \mttcind{\bf +CT+AC} to the consistency of  {\bf CZF +REA} through various interpretations.
We  reach this goal  in two ways.
 
 One way consists in first interpreting  \mttcind{\bf +CT+AC} 
in the theory extending  {\bf CZF} with the {\bf Union Regular Extension Axiom}, $\mathbf{REA}_{\bigcup}$, a strengthening of {\bf REA}, as well as with the Axiom of Relativized Dependent Choice, $\mathbf{RDC}$. 
The theory $\mathbf{CZF}+\mathbf{REA}_{\bigcup}+\mathbf{RDC}$ is then interpreted in $\mathbb{MLS}^\ast$, a version of Martin-L{\"o}f's type theory with Palmgren's superuniverse
$\mathbb S$.
The last step consists in interpreting $\mathbb{MLS^\ast}$ back into  {\bf CZF +REA}.

The alternative way consists in first interpreting  \mttcind{$\bf +\ac +\ct$}  directly  in a version of Martin-L{\"o}f's type theory with Palmgren's superuniverse extended with $ \ct$,
which is then interpreted  in  {\bf CZF +REA} by extending the realizability semantics  in~\cite{mmr21}.

A  key benefit of the first way   is that the intermediate theory $\mathbf{CZF}+\mathbf{REA}_{\bigcup}+\mathbf{RDC}$ also supports the  intended set-theoretic interpretation of the extensional level of \mfcind. 


Finally, all the theories considered to reach our goals, except  \mttcind{$\bf +\ac +\ct$},  are shown to be of the same proof-theoretic strength.   

We leave it to future work to establish    the consistency strength  of  \mttind\ and \mttcind,
given that it is still an open problem to establish that of \mtt\ itself.                    

Another future goal would be to apply   the  realizability interpretations presented here  to build  predicative variants of Hyland Effective Topos as in~\cite{misam21} but in a constructive meta-theory  such as  {\bf CZF +REA}.

Last but not least there is the obvious question of  generalizing our result by extending  \mf\   with more general   coinductive definitions  
such as those of~\cite{LIN89,antirat} including streams and systems, or  strictly positive coinductive types  in~\cite{coinMLTT} or other coinductive definitions like those applied in~\cite{VDJSL,curicind, BT21}.
 The theory  $\mathbf{CZF}+\mathbf{REA}_{\bigcup}+\mathbf{RDC}$ is very capacious in that it allows for the interpretation of all of these types as sets. It is the ideal
  axiomatic theory for a very general treatment of inductive and coinductive definitions in its class and set forms, delineating the necessary conditions for such definitions to give rise to sets rather than classes (see~\cite{czf2} chapters 12 and 13). But it is not clear how to turn such  extensions into  proper two-level foundations according to~\cite{mtt}, by identifying an intensional type theoretic version of such coinductive definitions which
 is preserved by the syntactic quotient completion in~\cite{m09}. Actually  it is not clear how to define  such generic coinductive  types  within intensional type theory with well-behaved rules (see for example~\cite{failure}). Moreover, the current encodings of coinductive types in~\cite{coinMLTT,coinHott}
are performed by employing extensional principles like extensional equality of functions which is not validated in our realizability interpretation by its inconsistency
with $ \ac+\ct$. 

\section{The extension \mfcind\  with  inductive and coinductive topological definitions.}\label{mfcind}
Here we  introduce an extension of the Minimalist Foundation \mf,  called \mfcind,
 equipped with  the inductive and coinductive topological  definitions necessary to generate Sambin's Basic Topologies  and Positive Topologies in~\cite{somepoints}
 within the approach of  Martin-L{\"o}f -Sambin's Formal Topology in~\cite{S87}.
 In particular, the notion of Basic Topology constitutes  an enrichment of  the notion of  \emph{basic cover} in~\cite{batsampre}  
 (used to describe open subsets of a point-free topology)  with a  \emph{positivity relation} used to describe closed  subsets in a primitive way.

%
 
%

 
 In more detail,  we recall that the   notion of  {\bf basic cover}   in~\cite{batsampre}  aims to represent   complete suplattices in a predicative and constructive way
by an infinitary relation  indicated with the notation
\[a\cov V\]
between elements $a$ of a set $A$, thought of as \emph{basic opens}, and subsets $V$ of $A$, meaning that \emph{the basic open $a$ is covered by
    the union of basic opens in the subset $V$}.

The elements of the complete suplattices represented by the basic cover,  and called \emph{formal open subsets},  are  the fixpoints of the \emph{closure operator}
\[\cov(-) : \mathcal{ P}(A)\ \longrightarrow \  \mathcal{ P}(A)\]
defined by putting $\cov(V)\, \equiv\, \{ \ x\in A\ \mid\ x\cov
V\ \}$.
These suplattices are \emph{complete} with respect to families of subsets \emph{indexed over a set}.

Then  any  basic cover $a\cov V$ gives rise to a  \emph{Basic Topology} as in~\cite{somepoints}, 
 if it is paired with a {\bf positivity relation }
\[a\ltimes V\]
between elements $a$ of  a set $A$ and subsets $V$ of $A$, meaning that \emph{there is a point in  the basic open $a$ whose
basic neighbourhoods are all in $V$} (see~\cite{somepoints}). In more detail, the positivity relation is required 
 to satisfy the following   \emph{compatibility condition} with the basic cover
\begin{center}
    \begin{tabular}{l}
$
\displaystyle{\frac{  a\ltimes V   \qquad  a\cov U }
{\displaystyle   \exists_{x\in A}\  (\  x\ltimes V \  \ \& \  x\,\epsilon\, U\ )}}$
\qquad
\end{tabular}
\end{center}
and to induce an  \emph{interior operator}
   \[\ltimes(-) : \mathcal{ P}(A)\ \longrightarrow \  \mathcal{ P}(A)\]
defined by putting
$\ltimes(V)\, \equiv\, \{ \ x\in A\ \mid\ x\ltimes
V\ \}$.
Then, the  fixpoints of this operator  form the suplattice of formal closed subsets.

A basic topology whose  basic cover satisfies a convergence property is renamed as \emph{positive topology} in~\cite{postop}~\footnote{Positive topologies were called ``balanced formal topologies'' in~\cite{somepoints}.}, because  the resulting complete suplattice of $\cov$-fixpoints actually forms
a \emph{locale} enriched with a suplattice of formal closed subsets.  A positive topology is also a  formal topology in the sense of~\cite{S87} if the positivity relation $ a\ltimes A$ defines a positivity predicate $\mathsf{Pos}(a)$
meaning that  \emph{the basic open $a$ is inhabited by a point}. Therefore the notion of positive topology appears to be a strengthening of the notion of formal topology in~\cite{S87}
with the modification of just requiring the suplattice of formal opens to form a  locale rather than an open locale as  in~\cite{S87}.

Classically,  a positivity relation can be associated to any basic cover in the form 
\[a\ltimes V \ \equiv\  \neg a \cov \neg V\]
but constructively one needs to add a primitive operator (see~\cite{postop}).

A powerful method to generate basic and positive topologies  is to pair the inductive generation of basic covers  
 introduced in~\cite{CSSV03} with the coinductive generation of the positivity relation
 introduced by Martin-L{\"o}f and Sambin  in~\cite{somepoints}.  
%
Since  \mf\  cannot formalize
 all the inductively generated basic covers, like for example the inductive topology of the Baire space (see~\cite{mmr21}),
 it is clear that  to formalize such inductive and coinductive topological methods
we need to build an extension of  \mf\    like  \mfcind\  that  we are going to describe now.

 By  extension of \mf\  we actually mean \emph{a two-level extension}, namely we extend both the intensional level \mtt\ of \mf\
and its extensional level \emtt\ to form a two-level theory satisfying the requirements of a constructive foundation  in~\cite{mtt}.

In~\cite{mmr21} we  built  a  two-level theory,  called \mfind, 
by extending both levels of  \mf\  with the inductive definitions introduced in~\cite{CSSV03} necessary to generate basic covers  in~\cite{batsampre,cms13}.
 
 Knowing that the  generation of basic topologies consists in enriching the inductive generation of basic covers  in~\cite{CSSV03}  with the coinductive generation of positivity relations,
we simply  extend  both levels of \mfind\ with Martin-L{\"o}f-Sambin's  coinductive  positivity relations~\cite{somepoints}.
  
 Now, we define in detail  the two-level structure of \mfcind\  by first  describing its extensional level \emttcind\  
    and then its   intensional level \mttcind.

\subsection{The extensional level \emttcind\ of \mfcind}
The extensional level \emttcind\ of \mfcind\ is defined as the extension of the extensional level \emttind\  of \mfind\ in~\cite{mmr21} with the rules   generating  
 positivity relations by coinduction.

In \emttcind\ we still keep the four kinds of types of \emtt, namely
{\bf collections}, {\bf sets},  {\bf propositions} and {\bf small propositions}
according to the following subtyping relations:
\[
\xymatrix@C=4em@R=1em{
 {\mbox{\bf small propositions}}\ar@{^{(}->}[dd]\ar@{^{(}->}[rr]&  &
 {\mbox{\bf sets}}\ar@{^{(}->}[dd] \\ 
&&\\
  {\mbox{\bf propositions}}\ar@{^{(}->}[rr] && {\mbox{\bf collections}}
}
\]
where collections  include the power-collection
${\mathcal P}(A)$ (which is not a set!)  of  any set $A$ and small propositions are defined as those propositions closed
under intuitionistic connectives, and quantifiers and equalities restricted to sets.

In addition to the rules of  \emtt,  in \emttind\ and hence in \emttcind,  we have new primitive \emph{small propositions}
\[a\triangleleft_{I,C} V\ prop_s\]
 expressing that \emph{the basic open $a$ is covered by the union of basic opens in $V$}
 for any $a$ element of a \emph{set} $A$, $V$ subset of $A$,
 assuming that \emph{the basic  cover is
generated by a family
of (open) subsets  of $A$
indexed on  a family of  sets $I(x)\ set \ [x\in A]$ and represented by}
  \[C(x,j) \in {\mathcal P}( A)\ [x\in A, j\in I(x)] \]
which we  call \emph{axiom-set} and indicate with the abbreviation $I,C$.

As for \emtt\   and  \emttind,  in \emttcind\ we can can define a \emph{subset membership} relation
\[a\,\epsilon\, V\]
between a subset $V\in  {\mathcal P}( A)$ and an element $a$  of a set $A$ as in~\cite{m09},
which is different from the \emph{primitive} typing membership $a\in A$  which is  a judgement.

    We also adopt
    the convention of writing $\phi\ true$ for a proposition $\phi$
    instead of $\mathsf{true}\in \phi$ as in~\cite{m09}.

Recall that  the rules of \emttind\  in~\cite{mmr21} were obtained by extending those of \emtt\  with the addition of the rules
defining  the basic cover $a\triangleleft_{I,C} V\ prop_s$ inductively generated from an axiom-set  $I,C$.

The rules of \emttcind\ are obtained by extending those of \emttind\
with the following  ones defining  positivity relations by coinduction:
\\
    
  {\bf Rules of coinductive Positivity relations in \emttcind}
\[\begin{array}{l}
\\
\mbox{\rm F- Pos} \
\displaystyle{\frac{\begin{array}{l}
      A \ set \ \ \ \  I(x)\  set \  [x\in A] \ \ \ C(x,j) \in {\mathcal P}( A)\ \ [x\in A, j\in I(x)]\\
      V\in {\mathcal P}( A)\qquad a\in A\ \end{array} }
  {\displaystyle  a \ltimes_{I,C}V \ \ prop_s}}
\\[20pt]
\mbox{\rm crf- Pos} 
\displaystyle{\frac{\begin{array}{l}
      A \ set \ \ \ \  I(x)\  set \  [x\in A] \ \ \ C(x,j)\in {\mathcal P}( A) \ \ [x\in A, j\in I(x)]\\
   V\in {\mathcal P}( A) \qquad a\in A\
   \qquad\qquad    a \ltimes_{I,C}V   \ true \end{array} }
  {\displaystyle a\,\epsilon\, V\ true\ }}\\
\end{array}
  \]
\[\begin{array}{l}
\mbox{ax-mon-Pos} 
\displaystyle{
  \frac{\begin{array}{l} A\ set \ \ \ \  I(x)\  set \  [x\in A] \ \ \ C(x,j)\in {\mathcal P}( A)  \ \ [x\in A, j\in I(x)]\\
      a\in A\qquad i\in I(a)\qquad  \qquad  V\in {\mathcal P}( A)\qquad  a \ltimes_{I,C}V  \ true \end{array} }
  {\displaystyle  \exists_{y\in A} \ ( y\,\epsilon\, C(a,i)\ \& \  y \ltimes_{I,C}V \ )\ true}}
\\[20pt]
\mbox{cind-Pos} 
\quad
\displaystyle{
  \frac{\begin{array}{l} A\ set \ \ \ \  I(x)\  set \  [x\in A] \ \ \ C(x,j)\in {\mathcal P}( A)  \ \ [x\in A, j\in I(x)]\\
      a\in A  \qquad  V\in {\mathcal P}( A)\\
  P(x)\  prop\ [x\in A]\qquad 
        \mathsf{split}(V,P)\ true \qquad  P(a)\ true \end{array} }
 {a\ltimes_{I,C}V \ true\ }}
\end{array}\]
    where
    \[\begin{array}{rl}
      \mathsf{split}(V,P)\ \equiv\ & \forall_{x\in A}\ (\  P(x)\ \rightarrow\ ( \ x\varepsilon V\ 
    \ \&\  \forall_{z \in I(x)}\ \exists_{y \in A} \ (\ y\,\epsilon\, C(x,z) \ \&  \ P(y) \ ) \ ) \ )\end{array}\]

The coinductive generation of positivity relations is crucial to provide a predicative and constructive representation of results
concerning applications of  the theory of  positivity relations and basic topologies  to  that of locales, namely the definition of closed  sublocales in~\cite{subloc},
the relation between formal topology and   Brouwer's  principles  in~\cite{cont,postop} and the connections between locales, open locales and positive topologies in~\cite{mv04,emb}.
More in detail,   in \emttcind\   we can formalize   the embedding of the category of   locales represented as inductively generated formal covers  into  the category of positive topologies  in~\cite{emb},  by embedding any  inductive formal cover  $\cov_{I,C}$
as the positive topology with the same basic cover  enriched with the coinductive  positivity relation $\ltimes_{I,C} $.
Moreover, 
 in \emttcind\  we can formalize the coreflection in~\cite{mv04} of the category of locales represented as  inductively generated  formal covers 
 into its subcategory of open locales represented as inductively generated formal topologies
by coreflecting  any convergent basic cover   $\cov_{I,C}$ inductively generated  from an axiom-set $ I,C$,
into the  inductively generated formal topology   $(A, \cov_{I^+,C^+}, \mathsf{Pos}) $  whose
 positivity predicate $\mathsf{Pos}$ is defined in terms of the coinductive  positivity relation $\ltimes_{I,C} $ as
\[\mathsf{Pos}(a)\,  \equiv\  a \ltimes_{I,C} A\]
and its basic cover $\cov_{I^+,C^+}$  is inductively generated by a new axiom-set  $ I^+(x)\  set \  [x\in A]$ and $C^+(x,j)\in {\mathcal P}( A)\  [x\in A,j\in I^+(x)]$ extending the given  one  by adding  for each basic open $a$ an index $k$ such that
\[C^+(a,k)\, \equiv\, \{\  x\in A \  \mid\  \mathsf{Pos}(a)\ \}.\]

   In view of interpreting   \emttcind\  in the intensional level  \mttcind, 
as noted  in~\cite{mmr21} for basic covers,    it is crucial  that basic topologies  defined by  $\cov_{I,C}$ and $\ltimes_{I,C} $  on a  quotient set base $B/R$
 can be equivalently presented by   a cover and a positivity relation  on the set $B$ itself
which behaves like  $\cov_{I,C}$ and $\ltimes_{I,C} $, respectively,  but in addition 
it considers as equal opens those elements which are related by $R$.

In order to properly show this fact, 
we recall  a correspondence between subsets of $B/R$
    and subsets of $B$  in~\cite{mmr21}:
     \begin{defi}
      In \emttcind, 
      given  a quotient set $B/R$, for any subset $W\in \mathcal{P}(B/R)$
      we define
       \[\mathsf{es}(W)\, \equiv \, \{\ b\in B\ \mid\  [b]\,\epsilon\, W\     \}\]
        and given any $V\in  \mathcal{P}(B)$ we define $\mathsf{es}^{-}(V)\, \equiv \, \{\ z\in B/R\ \mid\  \exists_{b\in B}\ (\ b\,\epsilon\, V\ \wedge\ z=_{B/R}[b] \ )\}$.
    \end{defi}
  
    \begin{defi}\label{indquo}
Given an axiom set  represented by a set $A\,\equiv\, B/R$ with
$ I(x)\  set\  [x\in A]$ and $  C(x,j)\in {\mathcal P}(A) \ \ [x\in A, j\in I(x)]$, we define a new axiom set as follows:
      \[\begin{array}{l}
 A^R\, \equiv\, B\qquad \qquad I^R(x)\, \equiv\, I([x]) + \ (\Sigma y\in B)\ R(x,y) \qquad \qquad \mbox{ for } x\in B\\
\end{array}
      \]
where $C^R(b,j)$ is the formalization of
      \[C^R(b,j)\, \equiv\, \begin{cases}
  \mathsf{es}(\, C([b],j)\, ) &  \mbox{ if } j\in  I([b])\\
  \{\, \pi_1(j) \, \} &  \mbox{ if } j\in (\Sigma y\in B)\ R(b,y) \\
      \end{cases}\]
for $b\in B$ and $j\in I^R(x)$.

\noindent We then call $\cov_{I,C}^R$ and $\ltimes_{I,C}^R$  the inductive basic cover  and the coinductive positivity relation
generated from this axiom set, respectively.
      \end{defi}

  From~\cite{mmr21} we know that:
    \begin{lem}\label{eqcov}
      For any axiom set in \emttind\ represented by a set $A\,\equiv\, B/R$ with
      $ I(x)\  set\  [x\in A]$ and $  C(x,j)\in {\mathcal P}(A) \ \ [x\in A, j\in I(x)]$, the suplattice of fixpoints defined by $\cov_{I,C}$ is isomorphic
      to that defined by $\cov_{I,C}^R$ by means of an isomorphism of suplattices.
\end{lem}
    Analogously we can show that
    \begin{lem}\label{ceqcov}
      For any axiom set in \emttind\ represented by a set $A\,\equiv\, B/R$ with
      $ I(x)\  set\  [x\in A]$ and $  C(x,j)\in {\mathcal P}(A) \ \ [x\in A, j\in I(x)]$, the suplattice of fixpoints  defined by $\ltimes_{I,C}$ is isomorphic
      to that defined by $\ltimes_{I,C}^R$ by means of an isomorphism of suplattices.
\end{lem}
    \begin{proof}
        Just observe that for any subset  $W$ of $B/R$
        which is a fixpoint for $\ltimes_{I,C}$ the subset
        $\mathsf{es}(W)$ is a fixpoint for $\ltimes_{I,C}^R$ by cind-Pos.
        Conversely,
        for any subset  $V$ of $B$
        which is a fixpoint for $\cov_{I,C}^R$ the subset 
        $\mathsf{es}^{-}(V)$ is a fixpoint for $\ltimes_{I,C}$ always by cind-Pos.
        \end{proof}

\subsection{The intensional level \mttcind}
Here we define the intensional level  \mttcind\ of \mfcind\ 
as an extension of the intensional level \mtt\ of \mf\ 
capable of interpreting the extensional level \emttcind.
We actually describe  \mttcind\  as  an extension of \mttind\  in~\cite{mmr21} with the rules generating a positivity relation by coinduction, as done
for \emttcind\  with respect to  \emttind.

As in \mtt\ in~\cite{m09} and in \mttind,  in \mttcind\ we have the same four kinds
of types as in \emttcind\  with the difference that  in \mttcind\ power-collections of sets
are replaced by a  \emph{collection of small propositions} $\mathsf{prop_s}$
and function collections $A\rightarrow \mathsf{prop_s}$ for any set $A$.
Such collections are enough to interpret power-collections of sets in \emttcind\  within a quotient model of dependent extensional types
built over \mttcind, as shown 
in~\cite{m09} when interpreting  \emtt\ in \mtt.

In addition to small proposition constructors of  \mtt,   in \mttcind\  and hence  in \mttind\  we have
new small propositions $ a\triangleleft_{I,C} V\ prop_s$
with corresponding   new proof-term constructors associated to them
in order to represent a proof-relevant version of 
  inductively generated basic covers
 so that
judgements asserting that some proposition is true  in \emttcind\ 
are turned into   judgements of \mttcind\
producing a proof-term of the corresponding proposition.

Moreover,  in \mttcind\ as in \mtt\  and in \mttind,  the universe of small propositions  is defined  in the version  \`a la Russell~\footnote{A version  of \mtt\ with the universe of small propositions  \`a la Tarski can be found  in~\cite{ms16}.}. 


When expressing the rules of inductive basic covers
 we used the abbreviation \[ a\,\epsilon\, V \qquad \mbox{ to mean }\qquad \mathsf{Ap}(\, V\, , \, a\, )\]
for any set $A$,  any small propositional function $ V\in A\rightarrow\mathsf{prop_s}$ and any element $a\in A$.
\\

   Formally, \mttcind\ is obtained by extending the rules of \mttind\ in~\cite{mmr21}
with the following rules defining  positivity relations by coinduction in the form of axioms with no equality rules:
    \\

  {\bf Axioms of  coinductive Positivity relations in \mttcind}
  \\
  \[\begin{array}{l}
\mbox{\rm F-Pos} \
\displaystyle{\frac{\begin{array}{l}
      A \ set \ \ \ \  I(x)\  set \  [x\in A] \ \ \ C(x,j) \in A\rightarrow 
      \mathsf{prop_s}\ [x\in A, j\in I(x)]\\
        V\in A\rightarrow\mathsf{prop_s}\qquad a\in A \end{array} }
  {\displaystyle a\ltimes_{I,C}V\ prop_s }}\\[20pt]
\mbox{\rm crf-Pos} \
\displaystyle{\frac{\begin{array}{l}
      A \ set \ \ \ \  I(x)\  set \  [x\in A] \ \ \ C(x,j) \in A \rightarrow \mathsf{prop_s} \ \ [x\in A, j\in I(x)]\\
        V\in A \rightarrow \mathsf{prop_s}
   \qquad a\in A\qquad q\in a\ltimes_{I,C}V  \ \end{array} }
  {\displaystyle \mathsf{ax_1}(a,q)\in  a\,\epsilon\, V}}
\\[20pt]
  \end{array}\]
  \[\begin{array}{l}

    \mbox{\rm ax-mon-Pos} \
\displaystyle{
  \frac{\begin{array}{l} A\ set \ \ \ \  I(x)\  set \  [x\in A] \ \ \ C(x,j) \in  A \rightarrow \mathsf{prop_s}\ [x\in A, j\in I(x)]\\
        V\in  A \rightarrow \mathsf{prop_s} \qquad\ \
      a\in A\qquad i\in I(a)\\
       q\in a\ltimes_{I,C}V \end{array} }
  {\displaystyle \mathsf{ax_2}(a,i,q)\in \exists_{y\in A} \ (\ y\epsilon C(a,i)\ \&\ y\ltimes_{I,C}V\ )  \  }}
\\[20pt]
\mbox{cind-Pos} 
\displaystyle{ \frac{ 
    \begin{array}{l}    A\ set \ \ \ \  I(x)\  set \  [x\in A] \ \ \
      C(x,j)\  \in  A\ \rightarrow \mathsf{prop_s} \ [x\in A, j\in I(x)]\\[5pt]
      P(x)\  prop\ [x\in A]\qquad
      V\in A\ \rightarrow\  \mathsf{prop_s}\qquad \\[5pt]
      a\in A\qquad m\in P(a)\\[5pt]
      q_1(x,z)\in V(x)\ [ x\in A, z\in P(x)]\qquad \qquad \\[5pt]
      q_2(x,j, z) \in \exists_{y\in A}  ( P(y)\ \& \ y\,\epsilon\, C(x,j) )\ 
      [ x\in A, j\in I(x),  z\in P(x)]    \end{array} }{\mathsf{ax_3}(a,m, q_1,q_2) \in  a\ltimes_{I,C}V}}
  \\[20pt]
  \end{array}\]

  As noted in~\cite{mmr21}  for  the cover relation, the positivity relation preserves extensional equality of subsets represented as small propositional functions:
  
\begin{lem}\label{prepos}
      For any axiom set in \mttcind\ represented by a set $A$ with
      $ I(x)\  set\  [x\in A]$ and $  C(x,j)\in A\ \rightarrow \mathsf{prop_s} \ \ [x\in A, j\in I(x)]$, for any propositional functions
      $  V_1\in  A\ \rightarrow \mathsf{prop_s}$ and $  V_2\in  A\ \rightarrow \mathsf{prop_s}$,  we can derive for a proof-term $q$ 
      \[q\in V_1=_{ext} V_2\ \rightarrow\  \mathsf{Pos}_{I,C} (a, V_1)=_{ext} \mathsf{Pos}_{I,C} (a, V_2)\]
      where for any small propositional functions $W_1$ and $W_2$
on a set $A$ we use the following abbreviation $W_1=_{ext}W_2 \, \equiv\, \forall_{x\in A}\ (\ W_1(x)\ \leftrightarrow\ W_2(x)\ )$.
    \end{lem}
    
        Recall that the interpretation of \emtt\ in \mtt\ in~\cite{m09}, as well as that of \emttind\ in \mttind\ in~\cite{mmr21}, 
        interprets a set $A$ as an \emph{extensional quotient} defined in \mtt\
        as a set $A^J$  of \mtt, called \emph{support}, equipped
        with an equivalence relation $=_{A^J}$ over $A^J$, as well as families of
        sets are interpreted as families of extensional sets preserving the equivalence
        relations in their telescopic contexts. Now, 
         Lemma~\ref{prepos} suggests that  we can interpret a coinductive positivity relation on a set $A$ of \emttcind\  
         within \mttcind\  as a coinductive positivity relation of \mttcind\ on  the support $A^J$ by enriching the interpretation of the axiom-set in \mttcind\ with the equivalence
         relation $=_{A^J}$ in a similar way to Definition~\ref{indquo} as follows:
         \begin{defi}
          For any axiom set in \mttcind\   represented by a set $A$ with
      $ I(x)\  set\  [x\in A]$ and $  C(x,j)\in A\ \rightarrow \mathsf{prop_s} \ \ [x\in A, j\in I(x)]$ 
      and for any given equivalence relation
      $x=_Ay \in   \mathsf{prop_s}\ [x\in A,y\in A]$ turning $A$ into an extensional set 
      as well as  the family of set  $ I(x)\  set\  [x\in A]$ and propositional functions $  C(x,j)\in A\ \rightarrow \mathsf{prop_s} \ \ [x\in A, j\in I(x)]$
      into an extensional family of sets and extensional propositional functions preserving $=_{A}$ according to the definitions in~\cite{m09},
         we define a new axiom set as follows
           \[\begin{array}{l}
 A^{=_A}\, \equiv\, A\qquad \qquad I^{=_A}(x)\, \equiv\, I(x) + (\Sigma y\in A)(  x=_A y)\qquad \qquad \mbox{ for } x\in A\\
\end{array}
           \]
where $C^{=_A}(a,j)$ is the formalization of
           \[C^{=_A}(a,j)\, \equiv\, \begin{cases}
   C(a,j)\, &  \mbox{ if } j\in  I(a)\\
  \{\, \pi_1(j) \, \} &  \mbox{ if } j\in (\Sigma y\in A)( a=_A y)\\
           \end{cases}\]
for $a\in A$ and $j\in I^{=_A}(x)$.

\noindent We then call $\cov_{I,C}^{=_A}$ and $\ltimes_{I,C}^{=_A}$ respectively the inductive basic cover and the coinductive positivity relation
generated from this axiom set.
      \end{defi}

We  are now ready to interpret \emttcind\ in the quotient model over \mttcind\ built as in~\cite{m09} by extending the interpretation of
\emttind\ in \mttind\ in~\cite{mmr21} as follows:
    \begin{prop}\label{fullcv}
      The interpretation of \emtt\ in \mtt\ in~\cite{m09} extends to an interpretation of \emttcind\ in \mttcind\ by interpreting an
       inductive basic cover $a \cov_{I,C} V$  and a coinductive positivity relation $a \ltimes_{I,C} V$   for $a\in A$
      and $V\in {\mathcal P}(A)$
    respectively  as the inductive basic cover $\cov_{I^J,C^J}^{=_{A^J}}$ and the coinductive positivity relation $\ltimes_{I^J,C^J}^{=_{A^J}}$
    over the support $A^J$ of the interpretation of $A$.
    \end{prop}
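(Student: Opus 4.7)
The plan is to extend the interpretation of \emttind\ in \mttind\ from \cite{mmr21} by providing the clauses for the new judgement forms concerning the coinductive positivity relation, and then to verify each of the four primitive rules \mbox{F-Pos}, \mbox{crf-Pos}, \mbox{ax-mon-Pos}, and \mbox{cind-Pos} of \emttcind\ inside \mttcind. Concretely, given a set $A$ of \emttcind\ interpreted as an extensional quotient $(A^J,=_{A^J})$ and an axiom-set $(I,C)$ interpreted as an extensional family of sets $I^J$ together with an extensional family of small propositional functions $C^J$, I would interpret $a\ltimes_{I,C}V$ as $a\ltimes_{I^J,C^J}^{=_{A^J}}V$, where the modified axiom-set is the one of the definition preceding the proposition, and similarly (as already done in \cite{mmr21}) $a\cov_{I,C}V$ as $a\cov_{I^J,C^J}^{=_{A^J}}V$.

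The formation rule \mbox{F-Pos} is immediate since $a\ltimes_{I^J,C^J}^{=_{A^J}}V$ is a small proposition of \mttcind\ by the axiom \mbox{F-Pos} there, once one checks (as in the extensional setting) that the interpretation of subsets $V\in\mathcal P(A)$ lifts to $=_{A^J}$-invariant small propositional functions $V^J$ on $A^J$. The rules \mbox{crf-Pos} and \mbox{ax-mon-Pos} then follow directly from the corresponding axioms of \mttcind\ applied to the modified axiom-set, observing that for the indices $j\in I^J(a)$ coming from the original $I^J$ the modified family $C^{=_{A^J}}$ agrees with $C^J$. The main verification is that of the coinduction rule \mbox{cind-Pos}: starting from a proposition $P(x)\ [x\in A]$ in \emttcind\ with $\mathsf{split}(V,P)$ true, one interprets $P$ as a family $P^J$ of propositions over $A^J$ that preserves $=_{A^J}$ in the sense of \cite{m09}, and then applies the \mttcind-axiom \mbox{cind-Pos} to $P^J$ with respect to the modified axiom-set. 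Here one must produce the two witness families $q_1,q_2$: the first is obtained from $\mathsf{split}(V,P)^J$ exactly as for the untwisted case, while for $q_2$ one distinguishes whether $j\in I^J(x)$ or $j\in (\Sigma y\in A^J)\,x=_{A^J}y$: in the first sub-case one again reuses $\mathsf{split}(V,P)^J$, in the second sub-case the required witness $y$ is $\pi_1(j)$ itself and $P^J(y)$ follows from $P^J(x)$ by $=_{A^J}$-extensionality of $P^J$.

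The main obstacle, and the conceptual reason the interpretation works at all, is precisely this extensionality issue: the coinductive positivity relation on the bare support $A^J$ with the original $(I^J,C^J)$ need not respect $=_{A^J}$, so the $=_{A^J}$-closure built into the modified axiom-set is essential, both to interpret the premises that quantify over proof-irrelevant elements of the quotient and to discharge the coinduction rule for arbitrary $=_{A^J}$-invariant predicates $P^J$. Once this is in place, Lemma~\ref{prepos} (applied to $V_1$ and $V_2$ that are pointwise $=_{A^J}$-equivalent small propositional functions on $A^J$) yields that the interpretation does not depend on the chosen representatives of subsets, so that the clauses for cover and positivity are well-defined on the quotient model; all remaining equations and derived rules of \emttcind\ then reduce to the corresponding facts in \mttcind\ exactly as in \cite{m09,mmr21}.
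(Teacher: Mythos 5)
Your proposal is correct and follows essentially the same route as the paper: interpret the positivity relation via the twisted axiom-set $\ltimes_{I^J,C^J}^{=_{A^J}}$ on the support, the whole point being that this makes it an extensional (i.e.\ $=_{A^J}$- and representative-independent, via Lemma~\ref{prepos}) proposition over the interpretations of $A$ and $\mathcal P(A)$, so the rules of \emttcind\ follow from the corresponding axioms of \mttcind. The paper's proof states only this observation, while you additionally spell out the rule-by-rule verification (including the case split on the extra indices in cind-Pos), which is a faithful expansion of the same argument rather than a different one.
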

       \begin{proof}
       The given interpretation coincides with the interpretation of \emttind\  in \mttind\ given in~\cite{mmr21} for what regards
   inductive basic covers.
      To check its correctness for what regards coinductive positivity relations, just observe that  $\ltimes_{I^J,C^J}^{=_{A^J}}$ 
       is an extensional proposition over the extensional set interpreting $A$ and over
      the interpretation of $ {\mathcal P}(A)$  in the sense of~\cite{m09}.
       \end{proof}

 \section{The Martin-L{\"o}f's  type theories \mlucf\ and \mlsi\ }  
 In order to derive the consistency of  \mttcind\   with $\ac+\ct$  from the consistency of $\mathbf{CZF}+\mathbf{REA}$ we introduce two auxiliary theories \mlucf\ and \mlsi\ 
  with the idea
 of reducing the consistency of    \mttcind\   with $\ac+\ct$ 
 to the consistency of \mlucf\ or \mlsi\ with just $\ct$. The crucial point is that both \mlucf\ and \mlsi\ are intensional Martin-L{\"o}f's type theories 
 governed by  Curry-Howard's interpretation of  propositions-as-sets  which validates the axiom of choice $\ac$ internally. This idea was already implemented  to obtain similar results both in~\cite{IMMS}  and in~\cite{mmr21}.
 
The theory \mlucf\ is an extension of the theory \mluif\ in~\cite{mmr21} with  rules generating coinductive positivity relations, which model those of \mttcind.
 
Instead, the theory \mlsi\   includes  Palmgren's superuniverse $\mathcal{S}$ in~\cite{suppal}  which is used to interpret  small propositions
of \mttcind.
By contrast,   the fragments of  Martin-L{\"o}f's type theory  \mlucf\ here,  \mlu\ in~\cite{IMMS}  and \mluif\ in~\cite{mmr21}   just contain a single universe (whilst closed
under some primitive inductive definitions in \mluif\ and furthermore under some coinductive definitions in \mlucf) to interpret small propositions of the considered extensions of \mtt.  Moreover, as done for the universe in the theory \mluif\  in~\cite{mmr21}, we close  the superuniverse under inductive covers rather than
well-founded sets as  in~\cite{antirat}.  
The presence of the superuniverse in  \mlsi\   is crucial in defining the interpretation of the coinductive positivity relations of \mttcind\  thanks to the closure of the superuniverse under universe operators  and the axiom of choice.

 \subsection{The theory \mlucf\ } 
 \label{mlcpos}
 The theory $\mlucf$ is obtained by extending  \mluif\ in~\cite{mmr21} (which has one universe $\mathsf{U}_0$ \`a la Tarski) with the following axioms for coinductive positivity relations:\\
{\bf Coinductive positivity relations in $\mlucf$}
  
  \[\begin{array}{l}
\mbox{\rm F-Pos} \
\displaystyle{\frac{\begin{array}{l}
      s\in \mathsf{U}_0 \ \ \ \  i(x)\in \mathsf{U}_0\  [x\in \mathsf{T}(s)] \ \ \ c(x,j)\in \mathsf{T}(s)\rightarrow  \mathsf{U}_0\ [x\in \mathsf{T}(s), j\in \mathsf{T}(i(x))]\\
        v\in \mathsf{T}(s)\rightarrow\mathsf{U}_{0}\qquad a\in\mathsf{T}(s)  \end{array} }
  {\displaystyle a\widehat{\ltimes}_{s,i,c}v\in \mathsf{U}_0 }}\\[20pt]
\mbox{\rm crf-Pos} \
\displaystyle{\frac{\begin{array}{l}
     s\in \mathsf{U}_0 \ \ \ \  i(x)\in \mathsf{U}_0\  [x\in \mathsf{T}(s)] \ \ \ c(x,j)\in \mathsf{T}(s)\rightarrow  \mathsf{U}_0\ [x\in \mathsf{T}(s), j\in \mathsf{T}(i(x))]\\
       v\in \mathsf{T}(s)\rightarrow\mathsf{U}_{0}\qquad a\in\mathsf{T}(s)\qquad q\in a\ltimes_{s,i,c} v   \ \end{array} }
  {\displaystyle \mathsf{ax_1}(a,q)\in  a\,\epsilon\, v}}
\\[20pt]

    \mbox{\rm ax-mon-Pos} \
\displaystyle{
  \frac{\begin{array}{l}   s\in \mathsf{U}_0 \ \ \ \  i(x)\in \mathsf{U}_0\  [x\in \mathsf{T}(s)] \ \ \ c(x,j)\in \mathsf{T}(s)\rightarrow  \mathsf{U}_0\ [x\in \mathsf{T}(s), j\in \mathsf{T}(i(x))]\\
        v\in  \mathsf{T}(s) \rightarrow \mathsf{U}_0 \qquad\ \
      a\in \mathsf{T}(s)\qquad j\in \mathsf{T}(i(a))\\
       q\in a\ltimes_{s,i,c}v \end{array} }
  {\displaystyle \mathsf{ax_2}(a,i,q)\in (\Sigma y\in \mathsf{T}(s))\ (\ y\,\epsilon\, c(a,j)\ \times\ y\ltimes_{s,i,c}v )  \  }}
\\[20pt]
\mbox{cind-Pos} 
\displaystyle{ \frac{ 
    \begin{array}{l}     s\in \mathsf{U}_0 \ \ \ \  i(x)\in \mathsf{U}_0\  [x\in \mathsf{T}(s)] \ \ \ c(x,j)\in \mathsf{T}(s)\rightarrow  \mathsf{U}_0\ [x\in \mathsf{T}(s), j\in \mathsf{T}(i(x))]\\[3pt]
      P(x)\ type\ [x\in \mathsf{T}(s)]\qquad
      v\in \mathsf{T}(s)\ \rightarrow\  \mathsf{U}_0\qquad \\[3pt]
      a\in \mathsf{T}(s)\qquad m\in P(a)\\[3pt]
      q_1(x,z)\in \mathsf{T}(v(x))\ [ x\in \mathsf{T}(s), z\in P(x)]\qquad \qquad \\[3pt]
      q_2(x,j, z) \in(\Sigma y\in \mathsf{T}(s))  ( \ y\,\epsilon\, c(x,j)\times P(y) )\ 
      [ x\in \mathsf{T}(s), j\in \mathsf{T}(i(x)),  z\in P(x)]    \end{array} }{\mathsf{ax_3}(m, q_1,q_2) \in  a\ltimes_{s,i,c}v}}
  \end{array}\]
where $a\ltimes_{s,i,c} v$ is a shorthand for $\mathsf{T}(a\widehat{\ltimes}_{s,i,c} v)$  and we used the abbreviation $t\,\epsilon\, s$ for $\mathsf{T}(\mathsf{Ap}(s,t))$.

We can interpret the theory \mttcind\ (which has a universe of small propositions \`a la Russell) in \mlucf\ (which has a universe \`a la Tarski) as follows:
\begin{enumerate}
\item We first consider a pre-syntax of \mttcind\ consisting of pre-types and of pre-terms with the upshot that  pre-terms include also all the pre-types  contained in the universe of small propositions $\mathsf{prop}_s$; 
\item We define by mutual recursion on the complexity of the pre-syntax two interpretation functions; the first $\left\|-\right\|_t$ maps each pre-term of \mttcind\ to a pre-term of \mlucf, the second $\left\|-\right\|_T$ maps each pre-type of \mttcind\ to a pre-type of \mlucf; these interpretations are defined in the obvious way by simply following the proposition-as-types interpretation and having care of interpreting pre-types seen as pre-terms in code-terms corresponding to the relative pre-type interpretations. 
\end{enumerate}
The pre-type $\mathsf{prop}_s$ will be interpreted as the universe $\mathsf{U}_0$, and types and terms introduced in the positivity coinduction rules of \mttcind\ are interpreted in the obvious way using the corresponding types and terms in \mlucf.
Judgements of \mttcind\ are interpreted in judgements of $\mlucf$ in the obvious way, by using $\left\|-\right\|_t$ or $\left\|-\right\|_T$ when the role of a small proposition in a judgement is that of a term or that of a type, respectively. For example if $\varphi$ is a small proposition in \mttcind\, the judgement $a\in\varphi$ is translated into $\left\|a\right\|_t\in \left\|\varphi\right\|_T$, while the judgement $\varphi\in \mathsf{prop}_s$ is translated into $\left\|\varphi\right\|_t\in \mathsf{U}_0$.

\begin{prop}\label{inmtt}  We can interpret   \mttcind$ + \ac +\ct $ into \mlucf$+\ct$ according to the above interpretation so that  the consistency of \mttcind\ $+\ac +\ct$ is reduced to the consistency of $\mlucf+\ct$. \end{prop}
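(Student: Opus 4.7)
The plan is to complete the interpretation $(\|\cdot\|_t, \|\cdot\|_T)$ defined in the excerpt by recursion on the pre-syntax of \mttcind, and then verify by induction on the length of derivations that every derivable judgement of \mttcind{} is sent to a derivable judgement of \mlucf. Since the two interpretation functions are defined jointly on pre-syntax, a preliminary coherence lemma is required: for every pre-type $\varphi$ for which $\varphi \in \mathsf{prop}_s$ is derivable in \mttcind, the equality $\mathsf{T}(\|\varphi\|_t) \equiv \|\varphi\|_T$ must hold in \mlucf, so that the translation behaves uniformly whether $\varphi$ is used as a term witnessing $\varphi \in \mathsf{prop}_s$ or as the type of its proofs.

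For the rules shared with \mtt{} and \mttind, the argument is essentially the one carried out in \cite{IMMS,mmr21}: each type/term former in these fragments is mapped to its propositions-as-types counterpart in \mlucf, with small propositions additionally carrying their codes in $\mathsf{U}_0$. The new cases are the four rules F-Pos, crf-Pos, ax-mon-Pos and cind-Pos for coinductive positivity relations, together with the translation of axiom-sets: an axiom-set in \mttcind{} given by $I(x)\ set\ [x\in A]$ and $C(x,j)\in A\to \mathsf{prop}_s$ is translated so that $\|I\|_T$ becomes a family of types over $\mathsf{T}(\|A\|_t)$ and $\|C\|_t$ becomes an element of $\mathsf{T}(\|A\|_t)\to\mathsf{U}_0$, which is exactly the input shape required by the rules of \mlucf{} in Section~\ref{mlcpos}. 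The four \mttcind{} rules then match line by line with their \mlucf-counterparts, so soundness for positivity relations is immediate.

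Once soundness is established, validity of \ac{} in the image follows automatically from the propositions-as-types reading: a term of $(\forall x\in A)(\exists y\in B)\, R(x,y)$ translates into a term of a $\Pi\Sigma$-type, whose first projection provides the required choice function. The schema \ct{} is preserved verbatim because $\mathsf{N}$, the Kleene predicate $T$, and the extractor $U$ are mapped to their literal counterparts in \mlucf. Hence any closed derivation of $\bot$ in \mttcind+\ac+\ct{} would translate into an inhabitant of the empty type in \mlucf+\ct, reducing the consistency of the former to that of the latter.

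The main obstacle I anticipate lies in the coherence lemma above: since the small propositions of \mttcind{} form a universe \emph{à la Russell} while $\mathsf{U}_0$ of \mlucf{} is \emph{à la Tarski}, every rule involving a variable of type $\mathsf{prop}_s$ forces a case analysis on whether that variable is subsequently used as a term in the universe or as a dependent type, and one must show the two choices produce interderivable judgements. This bookkeeping is routine but delicate in contexts with several such hypotheses; it proceeds exactly as in the analogous interpretations of \mtt{} into \mlu{} in \cite{IMMS} and of \mttind{} into \mluif{} in \cite{mmr21}, the coinductive positivity cases adding no new difficulty since the rules of \mlucf{} were engineered to mirror those of \mttcind{} exactly.
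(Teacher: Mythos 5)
Your proposal follows essentially the same route as the paper: the paper likewise defines the pair $\left\|-\right\|_t$, $\left\|-\right\|_T$ by mutual recursion on pre-syntax (handling the Russell/Tarski mismatch by interpreting small propositions-as-terms via codes in $\mathsf{U}_0$), maps the positivity rules of \mttcind\ onto the matching axioms of \mlucf, and relies on the propositions-as-types validity of \ac\ and the verbatim preservation of \ct\ to obtain the consistency reduction. Your explicit coherence lemma and the induction on derivations are just a more detailed spelling-out of what the paper leaves implicit, so the argument is correct and in line with the paper's own proof.
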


 \subsection{The theory \mlsi\ }
  
  We here briefly describe the theory \mlsi\  obtained by extending the first-order fragment of intensional Martin-L{\"o}f's type theory in~\cite{PMTT} with
 a superuniverse $\mathcal{S}$ \`a la Tarski closed under \emph{inductive covers} besides the usual first-order
 type constructors and universe constructors  as in~\cite{suppal}.
 
  In accordance with Curry-Howard's proposition-as-sets interpretation, which is a peculiarity of   Martin-L{\"o}f's type theory,
as done for  \mluif\  in~\cite{mmr21}  and contrary to \mttind\ as well as to \mttcind,  we strengthen the elimination
 rule of inductive basic covers to act towards sets depending on their proof-terms according to inductive generation of types in Martin-L{\"o}f's type theory.

 To this purpose we add to \mlsi\  the code
 \[ a\, \widehat{\triangleleft}_{s,i,c}\, v\in \mathcal{S}\qquad \mbox{ for } a\in \mathsf{T}(s) \mbox{ and }
 v\in \mathsf{T}(s)\ \rightarrow \ \mathcal{S}\]
 meaning that \emph{the element $a$ of a small set  $\mathsf{T}(s)$  represented by the code $s\in \mathcal{S}$ is covered by the subset
 $v$} represented by a small propositional function from $\mathsf{T}(s)$
 to the (large) set of small propositions identified with $\mathcal{S}$ by the propositions-as-sets correspondence.



 Moreover, we use $\mathsf{axcov}(s,i,c)$ to denote collectively the following judgements
  \[s\in \mathcal{S} \ \ \ \  i(x)\in \mathcal{S} \  [x\in \mathsf{T}(s)] \ \ \
  c(x,j)\in \mathsf{T}(s)\rightarrow \mathcal{S} \ [x\in \mathsf{T}(s),j\in \mathsf{T}(i(x))]\]
 
  Then, the precise rules of inductive basic covers in  \mlsi\ are obtained by those of \mluif\  in~\cite{mmr21} by replacing $\mathsf{U}_0$ with $\mathcal{S}$
  as follows:\\

 {\bf Rules of inductively generated basic covers in \mlsi}
  \nopagebreak\bigskip\nopagebreak
  \[\begin{array}{l}
\mbox{\rm F-$\triangleleft$} \
\displaystyle{\frac{\begin{array}{l}
    \mathsf{axcov}(s,i,c)\qquad 
      a\in \mathsf{T}(s)\qquad  v\in \mathsf{T}(s)\rightarrow \mathcal{S}\ \end{array} }
  {\displaystyle a\,\widehat{\triangleleft}_{s,i,c}\, v\in \mathcal{S} }}\\
  \\[20pt]
  \end{array}\]
  \[\begin{array}{l}
\mbox{\rm rf-$\triangleleft$} \
\displaystyle{\frac{\begin{array}{l}
     \mathsf{axcov}(s,i,c)\qquad 
   a\in \mathsf{T}(s)\qquad  v\in \mathsf{T}(s)\rightarrow \mathcal{S}\qquad\  r\in a\,\epsilon\, v\ \end{array} }
  {\displaystyle\mathsf{rf}(a,r)\in a \triangleleft_{s,i,c} v}}\\
\\[20pt]
  \end{array}\]
  \[\begin{array}{l}
\mbox{\rm tr-$\triangleleft$} \
\displaystyle{
  \frac{\begin{array}{l} \mathsf{axcov}(s,i,c)\qquad
      a\in \mathsf{T}(s)\qquad j\in \mathsf{T}(i(a))\qquad   v\in \mathsf{T}(s)\rightarrow \mathcal{S} \\
      r\in (\Pi z\in \mathsf{T}(s))(z\,\epsilon\,c(a,j)\rightarrow z \triangleleft_{s,i,c}v) \end{array} }
  {\displaystyle \mathsf{tr}(a,j,r) \in a\triangleleft_{s,i,c}v }}\\
\\[10pt]
  \end{array}\]
      \[\begin{array}{l}
        \mbox{ind-$\triangleleft$} 
\displaystyle{\frac{ 
    \begin{array}{l}    \mathsf{axcov}(s,i,c)\qquad      v\in \mathsf{T}(s)\rightarrow \mathcal{S} \qquad P(x,u)\  type\ [x\in \mathsf{T}(s), u\in x\triangleleft_{s,i,c}v]\qquad \\
     a \in \mathsf{T}(s) \qquad m\in a\triangleleft_{s,i,c}v\\
          q_1 (x,w)\in P(x,\mathsf{rf}(x,w))\ [x\in \mathsf{T}(s), w\in  x\,\epsilon\, v]\\[3pt]
          q_2(x,h ,k,f)\in P(x,\mathsf{tr}(x,h,k))\ \ \\
          \quad \qquad  [x\in \mathsf{T}(s), h\in \mathsf{T}(i(x)),\\
            \qquad \qquad  k\in (\Pi z\in \mathsf{T}(s))(z\,\epsilon\,c(x,h)\rightarrow z \triangleleft_{s,i,c}v),\\
           \qquad \qquad\qquad f\in
            (\Pi z\in \mathsf{T}(s))(\Pi u\in z\,\epsilon\,c(x,h))\, P(z, \mathsf{Ap}(\mathsf{Ap}(k,z),u))]       
  \end{array} }{\mathsf{ind}(m, q_1,q_2)\in  P(a,m)     }}\\[20pt]
      \end{array}\]
\[  \begin{array}{l}
  \mbox{C$_1$-ind-$\triangleleft$} 
\displaystyle{\frac{ 
    \begin{array}{l}   \mathsf{axcov}(s,i,c)\qquad 
        v\in \mathsf{T}(s)\rightarrow \mathcal{S} \qquad P(x,u)\  type\ [x\in \mathsf{T}(s),u\in x\triangleleft_{s,i,c}v ]\\
     a \in \mathsf{T}(s) \qquad r\in a\,\epsilon\, v\\
          q_1 (x,w)\in P(x,\mathsf{rf}(x,w))\ [x\in \mathsf{T}(s), w\in  x\,\epsilon\, v]\\[5pt]
          q_2(x,h,k,f  )\in P(x,\mathsf{tr}(x,h,k))\ \ \\
        \quad \qquad  [x\in \mathsf{T}(s), h\in \mathsf{T}(i(x)),\\
            \qquad \qquad  k\in (\Pi z\in \mathsf{T}(s))(z\,\epsilon\,c(x,h)\rightarrow x \triangleleft_{s,i,c}v), \\
            \qquad \qquad\qquad f\in
            (\Pi z\in \mathsf{T}(s))(\Pi u\in z\,\epsilon\,c(x,h))\, P(z, \mathsf{Ap}(\mathsf{Ap}(k,z),u))]       
  \end{array} }{\mathsf{ind}(\mathsf{rf}(a,r), q_1,q_2)= q_1(a,r )\in  P(a,\mathsf{rf}(a,r))   }}\\[20pt]
\end{array}\]
\[  \begin{array}{l}
\mbox{C$_2$-ind-$\triangleleft$} 
\displaystyle{\frac{ 
    \begin{array}{l}  \mathsf{axcov}(s,i,c)\qquad 
        v\in \mathsf{T}(s)\rightarrow \mathcal{S} \qquad P(x,u)\  type\ [x\in \mathsf{T}(s),u\in x\triangleleft_{s,i,c}v ]\\
      a \in \mathsf{T}(s) \qquad j\in \mathsf{T}(i(a))  \qquad
       r\in (\Pi z\in \mathsf{T}(s))(z\,\epsilon\,c(a,j)\rightarrow z \triangleleft_{s,i,c}v)\\
          q_1 (x,w)\in P(x,\mathsf{rf}(x,w))\ [x\in \mathsf{T}(s), w\in  x\,\epsilon\, v]\\[5pt]
          q_2(x,h ,k,f  )\in P(x,\mathsf{tr}(x,h,k))\ \ \\
        \quad \qquad  [x\in \mathsf{T}(s), h\in \mathsf{T}(i(x)),\\
            \qquad \qquad  k\in (\Pi z\in \mathsf{T}(s))(z\,\epsilon\,c(x,h)\rightarrow z \triangleleft_{s,i,c}v),\\
            \qquad \qquad\qquad f\in
            (\Pi z\in \mathsf{T}(s))(\Pi u\in z\,\epsilon\,c(x,h))P(z, \mathsf{Ap}(\mathsf{Ap}(k,z),u))]         \end{array} }{\mathsf{ind}(\mathsf{tr}(a,j,r), q_1,q_2)= q_2(a,j,r,\lambda z.\lambda u.\mathsf{ind}(\mathsf{Ap}(\mathsf{Ap}(r,z),u),q_1,q_2))\in  P(a,\mathsf{tr}(a,j,r))     }}\\[20pt]
\end{array}\]

 It is worth stressing that universes within the superuniverse are \emph{not necessarily
 closed under inductive covers}.

\begin{defi}\label{xixi}
A  crucial deviation from the ordinary versions of Martin-L\"of's type theory
is that for $\mlsi$, and also for $\mlucf$,  we \emph{postulate just the
  replacement rule repl}
  \[
\begin{array}{l}
      \mbox{repl)} \ \
\displaystyle{ \frac
         { \displaystyle 
\begin{array}{l}
 c(x_1,\dots, x_n)\in C(x_1,\dots,x_n)\ \
 [\, x_1\in A_1,\,  \dots,\,  x_n\in A_n(x_1,\dots,x_{n-1})\, ]   \\[2pt]
a_1=b_1\in A_1\ \dots \ a_n=b_n\in A_n(a_1,\dots,a_{n-1})
\end{array}}
         {\displaystyle c(a_1,\dots,a_n)=c(b_1,\dots, b_n)\in
 C(a_1,\dots,a_{n})  }}
\end{array}     
\]
  \emph{in place of the usual congruence rules which would include the $\xi$-rule} in accordance
with the rules of \mtt\ in~\cite{m09} (see~\cite{IMMS} for further details on this point).
\end{defi}

We employ this restriction in \mttcind, $\mlucf$ and  in \mlsi\  because 
the realizability semantics we present in the next sections, based on the original Kleene realizability in~\cite{DT88}, does not validate 
the $\xi$-rule\footnote{Notice that a trivial instance of the $\xi$-rule is derivable from repl) when $c$ and $c'$ do not depend on $x^B$.} of lambda-terms
\[
\mbox{  $\xi$} \
\displaystyle{\frac{ \displaystyle c=c'\in  C\ [x\in B]  }
{ \displaystyle \lambda x^{B}.c=\lambda x^{B}.c' \in (\Pi x\in B) C}}
\]
which is instead valid in~\cite{PMTT}.


Moreover, observe that the lack of the $\xi$-rule  does not affect the possibility of adopting \mtt\ as
the intensional
level of a  two-level constructive foundation
as intended in~\cite{mtt}, considered that
  the term equality rules  of \mttcind\ suffice to interpret
the extensional level \emttcind\ including extensionality of functions, 
 by means of a quotient model as that introduced in~\cite{m09} and studied abstractly
in~\cite{elqu,qu12,uxc}.

Now note that we can interpret \mttcind\    without Positivity relations, namely  \mttind\ in~\cite{mmr21}, within \mlsi\   by following  the same strategy adopted for interpreting \mttcind\ in \mlucf\ 
namely by defining  a pair of functions $\left\|-\right\|_t$ and $\left\|-\right\|_T$ as for Proposition~\ref{inmtt}.

Now we show that  in \mlsi\  we can define  positivity relations of \mlucf\   where the universe $\mathsf{U}_0$ is substituted
  with the superuniverse $\mathcal{S}$. In this way we can interpret in \mlsi\ not only \mttind\ but also \mttcind.

To increase readability and to avoid an heavy use of codes, we assume the following conventions: we call any type $A$ a \emph{small set}  if $A\equiv T(c)$ for some $c \in \mathcal{S}$.
Similarly,  we  say that  $V$ is a subset of $A$  and we write $V \subseteq A$ for any propositional function $V\in A\ \rightarrow \mathcal{S}$
derivable in \mlsi.  In addition, given two subsets $V,W$ of $A$, we  write $V\subseteq W$ if we have a proof-term $p$ for which we can derive
$p\in \forall_{x\in A}\ (\ V(x)\ \rightarrow \ W(x)\ )$. We also  adopt the abbreviation $\{\ x\in A \  \mid\ \phi(x) \ \}$ 
to indicate a propositional function $\phi\in A\ \rightarrow \mathcal{S}$.
 
 Moreover, given a family of small sets $B(x) \ type\ [x\in A]$ on a small set $A$
we denote by  $ U(A,B(x))$ the universe in $\mathcal{S}$ containing them.
 
Finally, we also  call  \emph{axiom-set }  the judgements
  \[A \ type \ \ \ \  I(x)\  type \  [x\in A] \ \ \ C(x,j) \ type\ [x\in A, j\in I(x)]\]
 if and only if they are derived from an axiom-set  $\mathsf{axcov}(s,i,c)$
 in the sense that $A=T(s)$, $I(x) =T(i(x))$  and $C(x,j)= T(c(x,j))$ for $x\in A$ and $j\in I(x)$.

\begin{thm}\label{coipos}
In \mlsi\ we can define
  Coinductive Positivity relations satisfying the instances of the  rules in \mlucf\  at the beginning of Subsection~\ref{mlcpos} where the universe $\mathsf{U}_0$ is substituted
  with the superuniverse $\mathcal{S}$.
     \end{thm}
        {\bf Proof.} We adapt here  an  argument in the appendix of~\cite{mv04} originally due to Thierry Coquand. For sake of readability and thanks to the notational conventions just established, we will proceed  as if we were working  with a superuniverse \`a la Russell. 
        
%

For any  axiom-set   $A \ type$ with $ I(x)\  type \  [x\in A] $ and $C(x,j) \ type\ [x\in A, j\in I(x)]$,  made all of small sets and small families,
 and for any fixed subset  $V \subseteq A$ and $a\in A$ the positivity relation amounts to be defined as 
 \[a\ \ltimes V \equiv\, a\,\epsilon\, W_{\max}(V)\in  \mathcal{S}\]
where $W_{\max}(V) \in  \mathcal{S}$ is 
 the maximal fixpoint of an operator $\tau\in \mathcal{P}_{int}(V) \to \mathcal{P}_{int}(V)$ on the \emph{intensional} representation  of the powerset   of the subset $V$
 \[\mathcal{P}_{int}(V) \equiv \Sigma_{x\in A} V(x)\ \rightarrow \mathcal{S}\]
defined by setting
\[
\tau(X)
\ \equiv
\ \{\  x \in A \ \mid\ x\,\epsilon\, X \ \& \ 
\forall_{i \in I(x)}\ \exists_{y\in A}\ (\ y \,\epsilon\, C(x,i)\ \&\ y\,\epsilon\, X \ )\ \}
\]
for any subset $X$ of $A$ within $V$ which preserves  extensional equality of subsets as  $=_{ext}$ of Lemma~\ref{prepos}~\footnote{The operator $\tau$
is clearly monotone and hence in an impredicative classical foundation,  by Tarski fixpoint theorem,
it admits a maximal fixed point  $W_{\max}(V)$  defined as
\emph{the union of all the subsets $Y$ of $A$ within $V$ such that
$Y \subseteq \tau(Y)$} after noting that such a 
family of subsets is not empty,  since the empty subset $\emptyset$ satisfies the condition
trivially.}.

Now, observe that there exists a universe $U_{I,C}$ in the superuniverse $\mathcal{S}$ containing  all the components of the axiom-set.
%
%
%
Then  the biggest fixed point can be defined  in \mlsi\  as an element of $A\ \rightarrow\  \mathcal{S}$ as follows
\[ W_{\max}(V) \ \equiv\ \{\ x\in A\ \mid\ 
\exists\ _{Z\in A\ \rightarrow \ U_{I,C}}\ (\ x \,\epsilon\, Z\ \& \ ( \ Z \subseteq \tau(Z)\ \ \&\ Z \subseteq V\ )\ )\  \}\]
since $\mathcal{S}$  contains $U_{I,C}$
as well as $Z$ and $V$.


This is really the biggest fixed point since for any fixed subset
 $Y$ of $A$ within $V$  such that $Y \subseteq \tau(Y)$
and any
fixed $\overline{a}\in A$ such that 
$\overline{a} \,\epsilon\, Y$, we have that  $\overline{a}\,\epsilon\, W_{\max}(V)$ holds, namely
that
$Y\subseteq W_{\max} (V)$.

To this purpose by an application of the so called \emph{axiom of choice} of Martin-L{\"o}f's type theory
to the formalization of $Y \subseteq \tau(Y)$
\[
\forall_{x \in A}\ (\ x \,\epsilon\, Y\  \to 
\  \forall_{j\in I(x)}\ \exists_{w \in A}\ (\ w \,\epsilon\, C(x,j)\ \&\ w \,\epsilon\, Y\ ) \ )
\]
we derive  the existence of a choice function $f_x$ for any $x\in A$ as follows
\[
\forall_{x \in A}\ (\ x \,\epsilon\, Y\  \to \
\ \exists_{{f_x} \in I(x) \to A }\ \ \forall_{j \in I(x)}\ (\ f_x(j) \,\epsilon\, C(x,j)\ \& \ f_x(j) \,\epsilon\, Y\ )\  )\]

Then, 
we define  by induction on natural numbers the following
sequence of subsets of $A$:
\[
\begin{array}{rcl}
X_0 &\equiv &\{\ w\in A\ \mid\ w=_A\overline{a}\   \}
\\
X_{n+1} &\equiv 
&X_n \cup \{\  w \in\  A \mid \exists_{x\in A}\ \exists_{j\in I(x)} (\ (\ x\,\epsilon\, X_n \ \&\  w=_{A} f_x(j)\ \}
\end{array}
\]

Observe that the subset of $A$ defined as
\[
X^{\ast} \equiv \bigcup_{n \in \mathsf{Nat}} X_n
\]
is in $U_{I,C}$, since  $U_{I,C}$ is closed under all first-order type constructors, and $X^{\ast}\ \subseteq \ Y$.


Finally observe that $X^{\ast} \subseteq \tau(X^{\ast})$ is true in \mlsi\ and hence $X^\ast \subseteq W_{\max}(V)$ so that we can conclude $\overline{a}\ \ltimes V \equiv\, \overline{a}\,\epsilon\, W_{\max}(V)\in  \mathcal{S}$ as claimed.

\medskip

 
By following  the same strategy adopted for Proposition~\ref{inmtt} we can conclude:
\begin{cor}\label{cindinsup}
   \mttcind\  as well as \mlucf\ can be interpreted in \mlsi\ by interpreting  coinductive positivity relations as in Theorem~\ref{coipos}.
   In particular,    the consistency of  \mttcind\ $+\ac +\ct$ is reduced to the consistency of $\mlsi+\ct$.   \end{cor}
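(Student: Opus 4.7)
The plan is to combine the pre-syntactic translation strategy already used in Proposition~\ref{inmtt} with the internal construction of coinductive positivity relations provided by Theorem~\ref{coipos}. More concretely, I would first define an interpretation of \mlucf\ into \mlsi, and then either compose it with the interpretation of Proposition~\ref{inmtt} or mimic that interpretation directly to obtain an interpretation of \mttcind\ into \mlsi. Since both $\mlucf+\ct$ and \mlsi\ are intensional Martin-L\"of-style theories without the $\xi$-rule (cf.\ Definition~\ref{xixi}), and since the constructors of \mlucf\ other than the positivity ones all come from \mluif, this translation will be defined by structural recursion on pre-terms and pre-types.

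For the interpretation of \mlucf\ into \mlsi, I would translate $\mathsf{U}_0$ as the superuniverse $\mathcal{S}$, and translate each first-order code constructor of $\mathsf{U}_0$ into the corresponding code constructor of $\mathcal{S}$; the rules and their attached computation rules then hold by the analogous rules of $\mathcal{S}$ in \mlsi. For inductive covers, I would use the fact that $\mathcal{S}$ is closed under the inductive cover formers $a\,\widehat{\triangleleft}_{s,i,c}\,v$ together with its elimination and equality rules, which lift directly from \mluif\ with $\mathsf{U}_0$ replaced by $\mathcal{S}$. For the new positivity constructors $a\,\widehat{\ltimes}_{s,i,c}\,v$, together with $\mathsf{ax}_1,\mathsf{ax}_2,\mathsf{ax}_3$, I would interpret them via the definition of $W_{\max}(V)$ and $a\,\epsilon\,W_{\max}(V)$ supplied by Theorem~\ref{coipos}; the four rules \textbf{F-Pos}, \textbf{crf-Pos}, \textbf{ax-mon-Pos}, and \textbf{cind-Pos} are validated precisely because $W_{\max}(V)$ is a fixpoint of the operator $\tau$ (yielding crf-Pos and ax-mon-Pos) and it is the maximal such fixpoint (yielding cind-Pos, via the construction of $X^{\ast}$ from the hypothesised $P$ playing the role of the ``bisimulation'' $Y$).

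Having this, I would then define an interpretation of \mttcind\ into \mlsi\ by mimicking the two mutually recursive functions $\|-\|_t$ and $\|-\|_T$ of Proposition~\ref{inmtt}, but now sending $\mathsf{prop}_s$ to $\mathcal{S}$ instead of $\mathsf{U}_0$; all the coinductive positivity rules of \mttcind\ are then interpreted using the definable operators of Theorem~\ref{coipos}. Alternatively, one composes the translation $\mttcind \to \mlucf$ of Proposition~\ref{inmtt} with the interpretation $\mlucf\to\mlsi$ just described; the resulting composite interpretation is identical on the common first-order skeleton, so either route works. Since the translation is the identity on the arithmetical fragment (Kleene predicate $T$ and extracting function $U$ included), it clearly carries $\ct$ to $\ct$; the axiom $\ac$ needs no separate treatment, as it is internally valid in \mlsi\ via Curry--Howard. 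Consequently any inconsistency of $\mttcind+\ac+\ct$ would translate to an inconsistency of $\mlsi+\ct$, yielding the claimed reduction.

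The main obstacle I expect is the careful verification that the definable operators of Theorem~\ref{coipos} satisfy \emph{all} of \textbf{ax-mon-Pos} and \textbf{cind-Pos} up to the strict $\beta\eta$-weak rules of $\mlsi$ (given the absence of the $\xi$-rule), and in particular that the choice function extracted from $P\subseteq\tau(P)$ lives in the universe $U_{I,C}\in\mathcal{S}$ so that $X^{\ast}$ is itself an element of $A\to U_{I,C}$. This is where the closure of the superuniverse under universe formation and first-order type constructors, together with the type-theoretic axiom of choice, is essential; once this is confirmed, the remaining routine bookkeeping follows the pattern of Proposition~\ref{inmtt}.
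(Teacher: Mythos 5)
Your proposal is correct and follows essentially the same route as the paper: the paper's proof of Corollary~\ref{cindinsup} is precisely the pre-syntactic translation of Proposition~\ref{inmtt} with $\mathsf{prop}_s$ (respectively $\mathsf{U}_0$) sent to the superuniverse $\mathcal{S}$, the positivity constructors interpreted via the $W_{\max}(V)$ construction of Theorem~\ref{coipos}, $\ac$ holding internally by Curry--Howard, and $\ct$ preserved by the translation. The worry you raise about the choice function and $X^{\ast}$ landing in $U_{I,C}$ is already discharged inside the proof of Theorem~\ref{coipos}, so no further work is needed at the level of the corollary.
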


\section{Inductive and coinductive definitions in Aczel's CZF}
In the following we shall introduce several inductively defined classes, and, moreover,
we have to ensure that such classes
can be formalized in $\CZF$.

\begin{defi}\label{MR1}
We define an \emph{inductive definition} to be a class of ordered pairs.
If $\Phi$ is an inductive definition and $\langle x,a\rangle\in\Phi$ then
we write
    \begin{eqnarray*} &&\step{x}{a}{_\Phi}\end{eqnarray*}
and call $\step{x}{a}{}$ an \emph{(inference) step} of $\Phi$, with set
$x$ of \emph{premisses} and \emph{conclusion} $a$.  For any class $Y$, let
\begin{eqnarray*} \Gamma_{\Phi}(Y)&=& \bigl\{a\,\mid\;\exists x\,\bigl(x\subseteq Y\;\;\wedge\;\;
\step{x}{a}{_\Phi}\,\bigr)\bigr\}.\end{eqnarray*}
 The class $Y$ is \emph{$\Phi$-closed} if $\Gamma_{\Phi}(Y)\subseteq Y$.
Note that $\Gamma$ is monotone; i.e.\ for classes $Y_1,Y_2$, whenever $Y_1\subseteq Y_2$, then
$\Gamma(Y_1)\subseteq \Gamma(Y_2)$.

We define the class \emph{inductively defined by $\Phi$} to be the
smallest $\Phi$-closed class.  
\end{defi}

The main result about inductively defined classes states
that this class, denoted $\I(\Phi)$, always exists.

\begin{thm}\label{ind} $(\CZF)$ \emph{(Class Inductive Definition Theorem)}
For any inductive definition $\Phi$ there is a smallest $\Phi$-closed
class $\I(\Phi)$.
\end{thm}
\begin{proof}
~\cite{aczel86} section 4.2 or~\cite{czf} Theorem 5.1 or~\cite{czf2} Theorem 12.1.1. \end{proof} 

A similar result can be obtained for the dual notion of largest $\Phi$-closed class. However, we need to enlist a choice principle.

\begin{defi}\label{MR2}  The
 \emph{Relativized Dependent Choices Axiom}, $\mathbf{RDC}$, is the following scheme.
 For arbitrary formulae $\phi$ and
$\psi$, whenever
  \[\forall x\bigl[\phi(x)\,\rightarrow\,\exists y
  \bigl(\phi(y)\,\wedge\,\psi(x,y)\bigr)\bigr]\] and $\phi(b_0)$, then
there exists a function $f$ with domain $\omega$ such that $f(0)=b_0$ and
  \[(\forall n\in\omega)\bigl[\phi(f(n))\,\wedge\,\psi(f(n),f(n+1))\bigr].\]
\end{defi}

\begin{thm}\label{coind}  $(\CZF+\mathbf{RDC})$ \emph{(Class Co-Inductive Definition Theorem)}
For any inductive definition $\Phi$ there is a largest $\Phi$-closed
class $\CoI(\Phi)$.
\end{thm}
\begin{proof}
~\cite{aczel88}, Theorem 6.5 (classically)  or~\cite{circrat} Theorem 5.17 or~\cite{czf2}  Theorem 13.1.3 in combination with Proposition 13.1.2. \end{proof} 

We are mostly interested in the case when $\I(\Phi)$ and $\CoI(\Phi)$ are sets. This requires some stronger axioms, though.

\begin{defi} A  set $A$ is a \emph{regular set} if it is transitive and inhabited such that whenever $a\in A$ and $R\subseteq A\times A$ is a relation satisfying
$\forall x\in A\, \exists y\in A\;xRy$ then there exists a set $c\in A$ such that
  \[\forall x\in a\,\exists y\in c\,xRy\;\;\wedge\;\;\forall y\in c\,\exists x\in a \,xRy.\]
The set $A$ is said to be strongly regular or $\bigcup$-regular if it is regular and $\forall a\in A\;\bigcup a\in A$. 

$\mathbf{REA}$ is the assertion that every set is contained in a regular set. $\mathbf{REA}_{\bigcup}$ is the assertion that every set is contained in a $\bigcup$-regular set. 
\end{defi}

\begin{thm}\label{indset}  $(\CZF+\mathbf{REA})$  Let $\Phi$ be an  inductive definition such that $\Phi$ is a set,
then $\I(\Phi)$ is a set.
\end{thm}
\begin{proof}~\cite{aczel86} or~\cite{czf} Theorem 5.7 or~\cite{czf2} Theorem 12.2.4.\end{proof}

\begin{thm} \label{coindczf} $(\CZF+\mathbf{REA}_{\bigcup}+\mathbf{RDC})$  Let $\Phi$ be an  inductive definition such that $\Phi$ is a set,
then $\CoI(\Phi)$ is a set.
\end{thm}
\begin{proof} This follows from~\cite{czf2} Theorem 13.2.3 in combination with~\cite{czf2}  Proposition 13.1.2.
More precisely,~\cite{czf2} Theorem 13.2.3 uses the notion of an $\mathbf{RRS}$ strongly regular set and assumes the axiom that every set is contained in an
$\mathbf{RRS}$ strongly regular set. In the presence of $\mathbf{RDC}$, however, one can show that every $\bigcup$-regular set which contains $\omega$ as an element
is already  an $\mathbf{RRS}$ strongly regular set. The latter follows basically by  the same argument as in the proof of~\cite{czf2}  Proposition 13.1.2. \end{proof}

\section{Realizability interpretations for \mlucf\ and \mlsi\ }           
\subsection{A realizability interpretation of $\mlucf$ with $\mathbf{CT}$ in $\mathbf{CZF}+\mathbf{REA}_{\cup}+\mathbf{RDC}$}   \label{sec-real}      

Here we are going to describe 
a realizability model of $\mlucf$ with \ct\
extending that in~\cite{mmr21} in the constructive theory $\mathbf{CZF}+\mathbf{REA}_{\cup}+\mathbf{RDC}$.

 As per usual in set theory,  we identify the natural numbers with the finite ordinals, i.e.\ $\mathbb{N}:=\omega$. To simplify the treatment we will assume that $\mathbf{CZF}$ has names for all (meta) natural numbers. Let $\overline{n}$ be the constant designating the $n^{th}$ natural number. We also assume that $\mathbf{CZF}$ has function symbols for addition and multiplication on $\mathbb{N}$ as well as for a primitive recursive bijective pairing function $\pp:\mathbb{N}\times \mathbb{N}\rightarrow \mathbb{N}$ and its primitive recursive inverses $\pp_0$ and $\pp_1$, that satisfy $\pp_0(\pp(n,m))=n$ and $\pp_1(\pp(n,m))=m$. We also assume that $\mathbf{CZF}$ is endowed with symbols for a primitive recursive length function $\ell:\mathbb{N}\rightarrow \mathbb{N}$ and a primitive recursive component function $(-)_{-}:\mathbb{N}\times \mathbb{N}\rightarrow \mathbb{N}$ determining a bijective encoding of finite lists of natural numbers by means of natural numbers. $\mathbf{CZF}$ should also have a symbol $T$ for Kleene's $T$-predicate and the result extracting function $U$. Let $P(\{e\}(n))$ be a shorthand for $\exists m(T(e,n,m)\wedge P(U(m)))$. Further, let $\pp(n,m,k):=\pp(\pp(n,m),k)$, $\pp(n,m,k,h):=\pp(\pp(n,m,k),h)$, etc.  We will use $\pp^n_i$ (for $0<i<n$) to denote the $i$th component function for which $\pp^n_i(\pp(m_0,\ldots,m_{n-1}))=m_i$ for every $m_0,\ldots,m_{n-1}\in \mathbb{N}$.
 A similar convention will be adopted for application of partial recursive functions: Let $\{e\}(a,b):=\{\{e\}(a)\}(b)$,  $\{e\}(a,b,c):=\{\{e\}(a,b)\}(c)$ etc.
 We use $a,b,c,d,e,f,n,m,l,k,q,r,s,v,j,i$ as metavariables for natural numbers.  

We first need to introduce some abbreviations: 
\begin{enumerate}
\item $\mathsf{n}_0$ is $\pp(0,0)$, $\mathsf{n}_1$ is $\pp(0,1)$ and $\mathsf{n}$ is $\pp(0,2)$.
\item $\sigmat(a,b)$ is $\pp(1,\pp(a,b))$, $\pit(a,b)$ is $\pp(2,\pp(a,b))$ and $+(a,b)$ is $\pp(3,\pp(a,b))$.
\item $\mathsf{list}(a)$ is $\pp(4,a)$ and $\mathsf{id}(a,b,c)$ is $\pp(5,\pp(a,b,c))$.
\item $a\widetilde{\triangleleft}_{c,d,e}b$ is $\pp(6,\pp(a,b,c,d,e))$.
\item $\rft(a,r)$ is  $\pp(7,\pp(a,r))$.
\item $\trt(a,j,r)$ is $\pp(8,\pp(a,j,r))$. 
\item $a\widetilde{\ltimes}_{c,d,e} b$ is  $\pp(9,\pp(a,b,c,d,e))$.
\end{enumerate}

Recall that in intuitionistic set theories ordinals are defined as transitive sets all of whose members are transitive sets, too. Unlike in the classical case, one cannot prove that they are linearly ordered but they are perfectly good as a scale along which one can iterate various processes. The trichotomy of $0$, successor, and limit ordinal, of course, has to be jettisoned. As usual, we use lowercase Greek letters as metavariables for ordinals.

\begin{defi}\label{fixdef} By transfinite recursion on ordinals (cf.~\cite{czf2}, Proposition 9.4.4) we define simultaneously two relations $\mathsf{Set}_\alpha(n)$ and $n\,\varepsilon _\alpha\,m$ on $\mathbb{N}$ in $\mathbf{CZF}+\mathbf{REA}_{\cup}+\mathbf{RDC}$.

In the following definition we use the shorthand $\mathsf{Fam}_\alpha(e,k)$ to convey that $\mathsf{Set}_\alpha(k)$ and $\forall j(j\,\varepsilon _\alpha\,k\rightarrow \mathsf{Set}_\alpha(\{e\}(j)))$ and we shall write $\mathsf{Set}_{\in \alpha}(n)$ for $\exists \beta\in \alpha(\mathsf{Set}_\beta (n))$, $n\,\varepsilon _{\in\alpha}\,m$ for $\exists \beta\in \alpha(n\,\varepsilon _\beta\,m)$ and $\mathsf{Fam}_{\in\alpha}(e,k)$ for $\exists \beta\in \alpha(\mathsf{Fam}_\beta(e,k))$.
\begin{enumerate}
\item[]
\item $\mathsf{Set}_\alpha(\mathsf{n}_j)$ if $j=0$ or $j=1$, and $m\,\varepsilon _\alpha\,\mathsf{n}_j$ if $m<j$;
\item $\mathsf{Set}_\alpha(\mathsf{n})$ holds, and $m\,\varepsilon _\alpha\,\mathsf{n}$ if $m\in \mathbb{N}$.
\item[]
\item If $\mathsf{Fam}_{\in \alpha}(e,k)$, then $\mathsf{Set}_\alpha(\pit(k,e))$ and $\mathsf{Set}_\alpha(\sigmat(k,e))$;
\item[] if $\mathsf{Fam}_{\in \alpha}(e,k)$, then
\begin{enumerate}
\item $n\,\varepsilon _\alpha\,\pit(k,e)$ if there exists $\beta\in \alpha$ such that $\mathsf{Fam}_{\beta}(e,k)$ and 
$(\forall i\,\varepsilon _\beta\,k)\, \{n\}(i)\,\varepsilon_\beta\, \{e\}(i)$.\footnote{We use the obvious shorthand  $(\forall i\,\varepsilon _\beta\,k) \ldots$ for 
$\forall i[i\,\varepsilon_\beta\,k\rightarrow\ldots]$; also employed henceforth.} 
\item $n\,\varepsilon_\alpha\,\sigmat(k,e)$ if there exists $\beta\in \alpha$ such that $\mathsf{Fam}_{\beta}(e,k)$ as well as $\pp_0(n)\,\varepsilon_\beta\,k$ and $\pp_1(n)\,\varepsilon_\beta\, \{e\}(\pp_0(n))$.
\end{enumerate}
\item[]
\item If there exists $\beta\in\alpha$ such that $\mathsf{Set}_\beta(n)$ and $\mathsf{Set}_\beta(m)$, then $\mathsf{Set}_\alpha(+(n,m))$, and 
\item[] $i\,\varepsilon _\alpha {+}(n,m)$ if there exists $\beta\in\alpha$ such that $\mathsf{Set}_\beta(n)$, $\mathsf{Set}_\beta(m)$ and 
  \[[\pp_0(i)=0\wedge \pp_1(i)\,\varepsilon_\beta\,n]\,\vee\,[\pp_0(i)=1\wedge \pp_1(i)\,\varepsilon_\beta\,m].\]
\item[]
\item  If there exists $\beta\in\alpha$ such that $\mathsf{Set}_\beta(n)$, then $\mathsf{Set}_\alpha(\mathsf{list}(n))$, and 
\item[] $i\,\varepsilon_\alpha\, \mathsf{list}(n)$ if there exists $\beta\in\alpha$ such that $\mathsf{Set}_\beta(n)$ and $\forall j\,[j<\ell(i)\rightarrow (i)_j \,\varepsilon_\beta\,n]$.
\item[]
\item If $\mathsf{Set}_{\in \alpha}(n)$, then $\mathsf{Set}_\alpha(\mathsf{id}(n,m,k))$, and 
\item[] $s\,\varepsilon_\alpha\,\mathsf{id}(n,m,k)$ if there exists $\beta\in \alpha$ such that $\mathsf{Set}_{\beta}(n)$, $m\,\varepsilon_\beta\,n$ and $s=m=k$.
\item[]
\item Let $\beta\in \alpha$. Suppose that the following conditions (collectively called $*_\beta$) are satisfied: 
\begin{enumerate}
\item $\mathsf{Set}_\beta(s)$, 
\item $a\,\varepsilon _\beta\,s$, 
\item $\mathsf{Fam}_\beta(v,s)$,  
\item $\mathsf{Fam}_\beta(i,s)$ and
\item $\forall x\forall y[x\,\varepsilon _\beta s\,\wedge\, y\,\varepsilon _\beta \{i\}(x)\rightarrow \mathsf{Fam}_\beta( \{c\}(x,y),s)]$. 
\end{enumerate}
 Then $\mathsf{Set}_\alpha(a\widetilde{\triangleleft}_{s,i,c} v)$
\item[] For $\beta\in\alpha$ satisfying $*_\beta$, let $V_{\beta}$ be the smallest subset of $\mathbb{N}$ satisfying the following conditions:
\begin{enumerate}
\item if $z\, \varepsilon _\beta s$ and $r \,\varepsilon _\beta  \{v\}(z)$ then $\pp(z,\rft(z,r))\in V_{\beta}$;
\item if $r\in\mathbb{N}$, $z\, \varepsilon_\beta\, s $,  $j \,\varepsilon_\beta  \{i\}(z)$ 
and $(\forall u\, \varepsilon_\beta s)\,(\forall t\,\varepsilon_\beta \{c\}(z,j,u))\ \pp(u,\{r\}(u,t))\in V_\beta$ 
then $\pp(z,\trt(z,j,r))\in V_\beta$. 
\end{enumerate}
 The existence of the set $V_\beta$ is guaranteed by the fact that $\mathbf{REA}$ holds, i.e., Theorem~\ref{indset}. 
 \item[] We define $q\,\varepsilon _\alpha\,a\widetilde{\triangleleft}_{s,i,c} v$ by $\exists \beta\in \alpha\,[\,*_\beta \,\wedge\, \pp(a,q)\in V_{\beta}]$.
 \item[]
 
\item Let $\beta\in \alpha$. Suppose that the conditions $*_\beta$ as in $(7)$ are satisfied, then $\mathsf{Set}_\alpha(a\widetilde{\ltimes}_{s,i,c} v)$.
 \item[] For $\beta\in\alpha$ satisfying $*_\beta$, let $W_{\beta}$ be the largest subset of $\mathbb{N}$ satisfying the following conditions:
\begin{enumerate}
\item if $z\,\varepsilon_\beta\,s$ and $\pp(z,q)\in W_\beta$, then $\pp_0(q)\,\varepsilon_\beta\,\{v\}(z)$; 
\item if $z\,\varepsilon_\beta\,s$, $j\,\varepsilon_\beta\,\{i\}(z)$ and $\pp(z,q)\in W_\beta$, then 
  \begin{align*}
\pp_0(\{\pp_1(q)\}(j))\,\varepsilon_\beta\,s\wedge\\
 \pp_0(\pp_1(\{\pp_1(q)\}(j)))\,\varepsilon_\beta \{\{c\}(z,j)\}((\pp_0(\{\pp_1(q)\}(j)))\wedge\\
  \pp(\pp_0(\{\pp_1(q)\}(j)),\pp_1(\pp_1(\{\pp_1(q)\}(j)))) & \in W_\beta\end{align*}
\end{enumerate}
\item[]
\item[] The existence of the set $W_\beta$ is guaranteed by $\mathbf{REA}_{\cup}+\mathbf{RDC}$ (see Theorem~\ref{coindczf}). 
 \item[] We define $q\,\varepsilon _\alpha\,a\widetilde{\ltimes}_{s,i,c} v$ as $\exists \beta\in \alpha\,[\,*_\beta \,\wedge\, \pp(a,q)\in W_{\beta}]$.

\end{enumerate}
\end{defi}
%



As done in~\cite{mmr21} one can prove the following crucial lemma.

\begin{lem}\label{fix} \emph{($\mathbf{CZF}+\mathbf{REA}_{\cup}+\mathbf{RDC}$)}
\begin{itemize} 
\item For all $m\in \mathbb{N}$, if $\mathsf{Set}_\alpha(m)$ and $\alpha\subseteq \rho$, then $\mathsf{Set}_\rho(m)$.
  \item
For all $m\in \mathbb{N}$, if $\mathsf{Set}_\alpha(m)$, then for all $\rho$ such that $\mathsf{Set}_\rho(m)$, 
    \[\forall i\in \mathbb{N}(i\,\varepsilon _\alpha\,m\leftrightarrow i\,\varepsilon _\rho\,m).\]
\end{itemize}

\end{lem}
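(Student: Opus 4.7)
My plan is to prove both items simultaneously by transfinite ($\in$-) induction on $\alpha$, with an inner case analysis on the outermost constructor of the code $m$, following the pattern of the analogous stability lemma established for \mluif\ in \cite{mmr21}. The key structural feature of Definition~\ref{fixdef} that makes such a proof go through is that every clause asserting $\mathsf{Set}_\alpha(m)$ or $i\,\varepsilon_\alpha\,m$ is witnessed either unconditionally (clauses (1)--(2)) or by the existence of some $\beta\in\alpha$ together with hypotheses involving only $\mathsf{Set}_\beta$, $\varepsilon_\beta$, $\mathsf{Fam}_\beta$ and the parameters of the code itself. Thus enlarging $\alpha$ to $\rho\supseteq\alpha$ can only enlarge the supply of witnesses without changing the semantic content attached to $m$.

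For the first item, I assume $\mathsf{Set}_\alpha(m)$ and $\alpha\subseteq\rho$ and do case analysis on which clause introduces $m$. Clauses (1)--(2) are trivial, since $\mathsf{Set}_\alpha(\mathsf{n}_j)$ and $\mathsf{Set}_\alpha(\mathsf{n})$ do not mention $\alpha$. For clauses (3)--(6), the premises take the form $\mathsf{Fam}_{\in\alpha}(e,k)$ or $\mathsf{Set}_{\in\alpha}(n)$, i.e., are witnessed at some $\beta\in\alpha$; since $\alpha\subseteq\rho$ implies $\beta\in\rho$, the same $\beta$ witnesses the corresponding premises at $\rho$, so $\mathsf{Set}_\rho(m)$ follows directly. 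Clauses (7) and (8) are handled identically: the condition $*_\beta$ holds for some $\beta\in\alpha\subseteq\rho$, and the auxiliary sets $V_\beta$ and $W_\beta$ supplied by Theorems~\ref{indset} and~\ref{coindczf} are functions of $\beta$ and the axiom-set data $(s,i,c,v)$ alone, so the same $\beta$ certifies $\mathsf{Set}_\rho(m)$.

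For the second item, I fix $m$ such that both $\mathsf{Set}_\alpha(m)$ and $\mathsf{Set}_\rho(m)$ hold and argue again by cases on the outermost constructor of $m$. In clauses (1)--(2) the relation $i\,\varepsilon_?\,m$ is literally ordinal-free. In clauses (3)--(6) the defining condition for $i\,\varepsilon_\alpha\,m$ existentially quantifies over some $\beta\in\alpha$ and asserts $\varepsilon_\beta$-statements at codes of strictly smaller complexity; the induction hypothesis applied to these smaller codes makes those $\varepsilon_\beta$-statements independent of whether we view them from inside $\alpha$ or $\rho$, so the two sides of the biconditional are witnessed by respective $\beta\in\alpha$ and $\beta'\in\rho$ that can be freely interchanged. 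The main obstacle is clauses (7) and (8): one must verify that the sets $V_\beta$ and, more delicately, the greatest fixpoint $W_\beta$ depend only on $\beta$ and $(s,i,c,v)$, and in particular not on the surrounding $\alpha$. This reduces by the inductive hypothesis to the stability of $\varepsilon_\beta$ on the codes $s$, $\{v\}(z)$, $\{i\}(z)$, $\{c\}(z,j)$, which we already have for $\beta\in\alpha\cap\rho$; once the $\varepsilon_\beta$-relation on those codes is stable, the operator whose (co)inductive fixed point yields $V_\beta$ (resp.~$W_\beta$) is the same in either context, so $V_\beta$ and $W_\beta$ coincide. Combined with the monotonicity proved in the first item, this gives the desired equivalence $i\,\varepsilon_\alpha\,m\leftrightarrow i\,\varepsilon_\rho\,m$ and closes the induction.
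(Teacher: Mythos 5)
Your proposal is correct and follows essentially the route the paper intends: the paper's proof is just an appeal to the analogous stability lemma of \cite{mmr21}, i.e.\ transfinite induction on $\alpha$ with a case analysis on the outermost constructor of the code, the only new point being clause (8), which you handle exactly as required—once the induction hypothesis makes $\varepsilon$ stable on the parameter codes $s,\{i\}(z),\{c\}(z,j),\{v\}(z)$, the greatest-fixed-point set (whose existence is Theorem \ref{coindczf}) is uniquely determined by those data, so membership transfers between witnessing ordinals. Only the phrase ``for $\beta\in\alpha\cap\rho$'' is loose, since the two witnesses $\beta\in\alpha$ and $\beta'\in\rho$ are in general distinct and it is the induction hypothesis (applied at $\beta\in\alpha$ with the other ordinal arbitrary) that identifies $V_\beta$ with $V_{\beta'}$ and $W_\beta$ with $W_{\beta'}$; your earlier remark about freely interchanging witnesses shows you have the intended argument.
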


\begin{defi}We define in $\mathbf{CZF}+\mathbf{REA}_{\cup}+\mathbf{RDC}$ the formula $\mathsf{Set}(n)$ as $\exists \alpha(\mathsf{Set}_\alpha(n))$ and $x\,\varepsilon \,y$ as $\exists \alpha(x\,\varepsilon _\alpha\, y)$.

\end{defi}

\begin{thm}\label{real}
        Consistency of the theory $\mathbf{CZF}+\mathbf{REA}_{\cup}+\mathbf{RDC}$  implies the consistency of the  theory $\mlucf$ extended with the formal Church thesis \ct.
        \end{thm}
 \begin{proof} We outline a realizability semantics in $\mathbf{CZF}+\mathbf{REA}_{\cup}+\mathbf{RDC}$. In what follows $\mathbf{p}$ will be a code for the primitive recursive pairing function $\pp$ introduced just before Definition~\ref{fixdef}, that is 
$\{\mathbf{p}\}(n,m) =\pp(n,m)$. Moreover $\mathbf{p}_1$ and $\mathbf{p}_2$ will be codes for $\pp_0$ and $\pp_1$. In the same way we fix codes $\mathbf{p}^n$ and $\mathbf{p}^n_{i}$ representing the encoding of $n$-tuples and their projections, respectively. 

Every pre-term is interpreted as a $\mathcal{K}_1$-applicative term (that is, a term built with numerals, variables and Kleene application) as it is done in~\cite{mmr21}. 

We must notice that in introducing codes for sets in the universe in Definition~\ref{fixdef} we took account of dependencies by means of natural numbers representing recursive functions; however every pre-term depending  on variables will be interpreted as a $\mathcal{K}_1$-applicative term having the same free variables (we identify the variables of \mlucf\ with those in $\mathbf{CZF}+\mathbf{REA}_{\cup}+\mathbf{RDC}$). For these reasons, whenever a term $s$ in \mlucf\ depends on terms $t_1,\ldots,t_n$ in context, its interpretation will depend on the interpretations of $t_1,\ldots,t_n$ bounded with adequate $\Lambda$ operators. The variables which will be bounded by these $\Lambda$s will be the ones used in the rule where the term $s$ is introduced. This abuse of notation allows us to avoid heavy fully-annotated terms in the syntax. 

In order to complete the interpretation of pre-terms we only need to interpret the new pre-terms of $\mlucf$ as follows.
\begin{enumerate}
\item $(a\ltimes_{s,i,c}v)^I:=\{\mathbf{p}\}(9,\{\mathbf{p}^5\}(a^I,v^{I},s^I,\Lambda x.i^I,\Lambda x.\Lambda y.c^I))$
\item $\mathsf{ax}_1(a,q)^I:=\{\mathbf{p}_0\}(q^I)$
\item $\mathsf{ax}_2(a,j,q)^I:=\{\{\mathbf{p}_1\}(q^I)\}(j^I)$
\item $\mathsf{ax_3}(a,m, q_1,q_2)^I:=\{\mathbf{p}\}(q_1^I[a^I/x,m^I/z],\Lambda j.q_2^I[a^I/x,m^I/z])$ 

\end{enumerate}

\noindent 
If $\tau$ is a $\mathcal{K}_1$-applicative term and $A=\{x|\,\phi\}$ is a class, we will define $\tau\,\in\, A$ as an abbreviation for $\phi[\tau/x]$.

The interpretation of types, contexts and judgements is exactly as in~\cite{mmr21}. 

\noindent In particular, we interpret pre-types into the language of $\mathbf{CZF}+\mathbf{REA}_{\cup}+\mathbf{RDC}$ as definable subclasses of $\mathbb{N}$ 
 as follows:
\begin{enumerate}
\item $\mathsf{N}_{0}^{I}:=\{x\in \mathbb{N}|\,\bot\}$.
\item $\mathsf{N}_{1}^{I}:=\{x\in \mathbb{N}|\,x=0\}$.
\item $\mathsf{N}^{I}:=\{x\in \mathbb{N}|\,x=x\}$.
\item $((\Sigma y\in A)B)^{I}:=\{x\in \mathbb{N}|\,\pp_0(x)\in A^{I}\wedge \pp_1(x)\in B^{I}[\pp_0(x)/y] \}$.
\item $((\Pi y\in A)B)^{I}:=\{x\in \mathbb{N}|\,\forall y\in \mathbb{N}\,[y\in A^{I}\rightarrow \{x\}(y)\in B^{I}]\}$.
\item $(A+B)^{I}:=\{x\in \mathbb{N}|\,[\pp_0(x)=0 \wedge \pp_1(x)\in A^{I}]\,\vee\,[\pp_0(x)=1 \wedge \pp_1(x)\in B^{I}]\}$.
\item $(\mathsf{List}(A))^{I}:=\{x\in \mathbb{N}|\,\forall i\in \mathbb{N}\,[i<\ell(x)\rightarrow (x)_{i}\in A^{I}]\}$.
\item $(\mathsf{Id}(A,a,b))^{I}:=\{x\in \mathbb{N}|\,x=a^{I}\wedge a^{I}=b^{I}\, \wedge\,  a^{I}\in A^{I}\}$.
\item $U_0^{I}:=\{x|\,\mathsf{Set}(x)\}$.
\item $\mathsf{T}(a)^{I}:=\{x|\,x\,\varepsilon \,a^{I}\}$.
\end{enumerate}

\noindent Pre-contexts are interpreted as conjunctions of set-theoretic formulas  as follows:
\begin{enumerate}
\item $[\;]^{I}$ is the formula $\top$;
\item $[\Gamma, x\in A]^{I}$ is the formula $\Gamma^{I}\,\wedge\, x^{I}\in A^{I}$.
\end{enumerate}

\noindent Validity of judgements $J$ in $\mathbf{CZF}+\mathbf{REA}_{\cup}+\mathbf{RDC}$ under the foregoing interpretation is defined as follows:

\begin{enumerate}
\item $A\, type\,[\Gamma]$ holds if $\mathbf{CZF}+\mathbf{REA}_{\cup}+\mathbf{RDC}\vdash \Gamma^{I}\to\forall x\,(x\in A^I\rightarrow x\in \mathbb{N})$ 
\item $A=B\, type\,[\Gamma]$ holds if  $\mathbf{CZF}+\mathbf{REA}_{\cup}+\mathbf{RDC}\vdash \Gamma^{I}\to\forall x\,(x\in A^{I}\leftrightarrow x\in B^{I})$
\item $a\in A\,\,[\Gamma]$ holds if  $\mathbf{CZF}+\mathbf{REA}_{\cup}+\mathbf{RDC}\vdash \Gamma^{I}\to a^{I}\in A^{I}$
\item $a=b\in A\, \,[\Gamma]$ holds if  $\mathbf{CZF}+\mathbf{REA}_{\cup}+\mathbf{RDC}\vdash \Gamma^{I}\to a^{I}\in A^{I}\wedge a^{I}=b^{I}$,
\end{enumerate}
where $x$ is a fresh variable.

The encoding of lambda-abstraction in terms of $\mathcal{K}_1$-applicative terms can be chosen (see~\cite{IMMS}) in such a way that if $a$ and $b$ are terms and $x$ is a variable which is not bounded in $a$, then the terms $(\,a[b/x]\,)^{I}$ and $a^{I}[\,b^{I}/x^{I}]$ coincide.

The rules relative to positivity relations in \mlucf\ are satisfied by the realizability interpretation and Theorem~\ref{coindczf} plays a crucial role for the validity of the rule cind-Pos. 
\end{proof}
\subsection{A realizability interpretation of $\mlsi$ with $\mathbf{CT}$ in $\mathbf{CZF}+\mathbf{REA}$}        $\phantom{M}$

Here we are going to describe 
a realizability model of $\mlsi$ with \ct\
 in the constructive theory $\mathbf{CZF}+\mathbf{REA}$. The interpretation is analogous to that of $\mluif$ in $\mathbf{CZF}+\mathbf{REA}$ in~\cite{mmr21}, but one needs to take care of the universe constructor $\mathbf{u}(a,(x)b)$ 
 for small universes; note that the latter are  not required to be closed under inductive and co-inductive types.
 Such universes can be modelled following the construction in Definition~\ref{fixdef} but omitting the clauses (7) and (8). Whereas the transfinite recursion of Definition~\ref{fixdef} 
 has to run through all ordinals, rendering the model a proper class, one can show in $\mathbf{CZF}+\mathbf{REA}$ that there exist ordinals $\rho$ where this recursion without the clauses (7) and (8) comes to a halt, that is, no new types 
 are generated at later stages $\gamma$ when $\rho\in \gamma$. To find such $\rho$, choose a regular set $R$ that contains all the relevant types and let $\rho$ be
 $\{\xi\mid\xi\in R\}$. As a result,  small universes can be modelled via sets in one fell swoop and then their sets and the elementhood relation between them become part of the big inductive definition of the superuniverse at level $\alpha$. In other words, the small universes $U(k, e, A,(Ba) a\in A) $ are already defined in their entirety before one starts to define the large superuniverse as a class.

 To be a bit more specific, given $k\in\mathbb{N}$, a set $A\subseteq\mathbb{N}$, $e\in\mathbb{N}$ for which $\{e\}(a)$ is defined for all $a\in A$  and a family 
 $(B_a)_{a\in A}$ of subsets of $\mathbb{N}$ such that $\{e\}(a) =\{e\}(a')$ entails $B_a=B_{a'}$, let \[\mathbb{U}(k,e,A,(B_a)_{a\in A})\] be the small universe defined by the clauses (1)--(6) of Definition~\ref{fixdef} plus an initial clause to the effect that $\mathbb{U}(k,e,A,(B_a)_{a\in A})\models \mathsf{Set}(k)$,
  $\mathbb{U}(k,e,A,(B_a)_{a\in A})\models \mathsf{Set}(\{e\}(a))$ for $a\in A$, $\mathbb{U}(k,e,A,(B_a)_{a\in A})\models a\,\varepsilon\, k$ iff $a\in A$, and
   $\mathbb{U}(k,e,A,(B_a)_{a\in A})\models m\,\varepsilon\, \{e\}(a)$ iff $m\in A_a$, whenever $a\in A$; here we use $\mathbb{U}(k,e,A,(B_a)_{a\in A})\models \theta$ to express that $\theta$ holds in the sense of  $\mathbb{U}(k,e,A,(B_a)_{a\in A})$. 
  
 
\noindent To define the model for  $\mlsi$  we proceed as in Definition~\ref{fixdef}, keeping clauses (1)--(7), but replacing clause (8) as follows.
 
 \begin{enumerate}
   \item[(8')] Suppose $\mathsf{Fam}_{\in \alpha}(k,e)$. Then \[\mathsf{Set}_{\alpha}(\widetilde{\mathsf{u}}(k,e)),\] where  $\widetilde{\mathsf{u}}(k,e):= \pp(10,\pp(k,e))$.

Let $A:=\{x\mid x\,\varepsilon_{\beta}\,k\mbox{ for some }\beta\in\alpha\}$ and $B_x:=\{v\mid v\,\varepsilon_{\beta}\,\{e\}(x)\mbox{ for some }\beta\in \alpha\}$.
Since $A$ and the sets $B_a$ are determined by $k$ and $e$, we just write
$\mathbb{U}(k,e)$ for the small universe $\mathbb{U}(k,e,A,(B_x)_{x\in A})$. We also want to inject $\mathbb{U}(k,e)$ into the model for $\mlsi$. 
This is effected by the following postulations.
\begin{enumerate}
\item $d\,\varepsilon_\alpha\, \widetilde{\mathsf{u}}(k,e)$  iff $\mathbb{U}(k,e)\models \mathsf{Set}(d)$.
\item  Moreover, if  $\mathbb{U}(k,e)\models \mathsf{Set}(d)$, then $\mathsf{Set}_{\alpha}(d)$,
and $x\,\varepsilon_{\alpha}\,d$ iff  $\mathbb{U}(k,e)\models x\,\varepsilon\, d$.
\end{enumerate}

\end{enumerate}

\noindent The interpretation of type theory is then carried out in the same vein as in~\cite{mmr21}.  One only has to add the interpretation of terms and types involving the universe constructors and the superuniverse. We follow here the syntax in~\cite{suppal}, although we use $\mathcal{S}$ for the superuniverse and $\mathsf{T}$ for its decoding constructor:
\begin{enumerate}
  \item the universe constructor terms are interpreted as $(\mathbf{u}(a,(x)b))^I=^{def}\{\mathbf{p}\}(10,\{\mathbf{p}\}(a^{I},\Lambda x.b^{I}))$ and their corresponding universe types are interpreted as \[(\mathsf{U}(\mathsf{T}(a),(x)\mathsf{T}(b)))^I:=\{x|\,x\,\varepsilon\,\{\mathbf{p}\}(10,\{\mathbf{p}\}(a^{I},\Lambda x.b^{I}))\};\footnote{We do not need to interpret universe types $\mathsf{U}(A,(x)B)$ for arbitrary pretypes $A$ and $B$ since universe types can be defined in $\mlsi$ only in the case in which $A$ and $B$ are (decodings of terms) in the superuniverse $\mathcal{S}$.}\]
\item the superuniverse is interpreted as $\mathcal{S}^{I}:=\{x|\,\mathsf{Set}(x)\}$ while its decodings are interpreted as $(\mathsf{T}(a))^{I}:=\{x|\,x\,\varepsilon\,a^I\}$;
\item the interpretation of terms $t(a,(x)b,c)$ representing the decoding in the superuniverse (which is itself still a code) of a code $c$ in the universe generated by the family $b$ on $a$, is simply defined as $c^I$, while the interpretation of decodings relative to the universe $\mathsf{U}(\mathsf{T}(a),(x)\mathsf{T}(b))$ is defined as 
  \[(\mathsf{T}(\mathsf{T}(a),(x)\mathsf{T}(b),c))^{I}:=\{x|\,x\,\varepsilon\, c^I\}\] 
\end{enumerate}
Hence we have the following theorem.
\begin{thm}\label{real2}
        Consistency of the theory $\mathbf{CZF}+\mathbf{REA}$  implies the consistency of the  theory $\mlsi$ extended with the formal Church thesis \ct.
        \end{thm}
\begin{cor}\label{main1}
        Consistency of the theory $\mathbf{CZF}+\mathbf{REA}$  implies the consistency of the  theory \mttcind\ extended with the formal Church thesis \ct\ and the axiom of choice
         $\ac$.
        \end{cor}
        \begin{proof}
        This is a consequence of Corollary~\ref{cindinsup} and Theorem~\ref{real2}.
        \end{proof}
       Moreover, we know from Theorem 4.6 in~\cite{mmr21} that $\CZF+\mathbf{REA}$ and $\mluif$ have the same proof-theoretic  strength. Since $\mluif$ is a subsystem of $\mlucf$ 
       which can be interpreted in $\mlsi$ as in Corollary~\ref{cindinsup}, from Theorem~\ref{real2} it follows that $\mlsi$ and $\CZF+\mathbf{REA}$ have the same proof-theoretical strength, as well as $\mluif$ and $\mlucf$.
\begin{cor}\label{pts} The following theories share the same proof-theoretic strength.
\begin{enumerate}
\item  $\CZF+\mathbf{REA}$
\item $\mluif$
\item $\mlucf$
\item $\mlsi$.
\end{enumerate}
\end{cor}

\begin{rem}\label{Wtypes}\em
It is worth noting that the above theorems still hold if we replace inductively generated formal topologies with the closure of \mtt-sets with  well founded trees in~\cite{PMTT}, also called $W$-types,
which we simply call $W$-sets when added to \mtt.
 Indeed we can prove that such a extension $\bf \mtt + \mbox{\bf $W$-sets}$ can be interpreted in the extension $\mathbb{MLS}_W$ of the first-order fragment of intensional Martin-L{\"o}f's type theory in~\cite{PMTT} with
 a superuniverse $\mathcal{S}$ \`a la Tarski closed under well founded trees, i.e. $W$-types,  besides the usual first-order
 type constructors and universe constructors  as in~\cite{suppal}.
 Then we can prove that $\mathbb{MLS}_W$ is consistent with $\ct$ and hence $\bf \mtt + \mbox{\bf $W$-sets}$  is consistent with $\ac+\ct$.
The proof that  $\mathbb{MLS}_W$ is consistent with $\ct$ can be obtained by building a realizability interpretation as that in this section which is in turn based on that of Section~\ref{sec-real}. This  is obtained by removing clause (7) in Section~\ref{sec-real}  and substituting it with clause (7'), after having introduced the abbreviations $\mathsf{w}(a,b):=\pp(11,\pp(a,b))$ and $\widetilde{\mathsf{sup}}(a,b):=\pp(12,\pp(a,b))$, as follows:
\begin{enumerate}
\item[(7')] Let $\beta\in \alpha$. Suppose that the following conditions are satisfied: 
\begin{enumerate}
\item $\mathsf{Set}_\beta(a)$, 
\item $\mathsf{Fam}_\beta(b,a)$.
\end{enumerate}
 Then $\mathsf{Set}_\alpha(\mathsf{w}(a,b))$.
\item[] For $\beta\in\alpha$ satisfying $\mathsf{Set}_\beta(a)$ and $\mathsf{Fam}_\beta(b,a)$, let $H_{\beta}$ be the smallest subset of $\mathbb{N}$ satisfying the following condition: if $c\,\varepsilon_{\beta}\,a$ and $\forall z\,\varepsilon_{\beta}\{b\}(c)\ (\{d\}(z)\in  H_\beta)$, then $\widetilde{\mathsf{sup}}(c,d)\in H_\beta$. 
 The existence of the set $H_\beta$ is guaranteed by the axiom $\mathbf{REA}$ and we define $h\,\varepsilon _\alpha\,\mathsf{w}(a,b)$ by $\exists \beta\in \alpha\,[\mathsf{Set}_\beta(a)\wedge \mathsf{Fam}_{\beta}(b,a) \,\wedge\, h\in H_{\beta}]$.
 
The interpretations of elimination terms $wrec$ (see p.99 pf~\cite{PMTT}) relative to W-types are obtained as codes $\mathbf{n}_{q}$ of recursive functions depending primitively recursively on the parameters in $q$ satisfying
    \[\{\mathbf{n}_q\}(\widetilde{\mathsf{sup}}(c,d))\simeq\{\Lambda x.\Lambda y.\Lambda z.q^I\}(c,d,\Lambda v.\{\mathbf{n}_q\}(\{d\}(v)))\]

\end{enumerate}

\end{rem}

\section{$\CZF$ with large set existence axioms and the superuniverse in type theory}
The axioms  $\mathbf{REA}$ and  $\mathbf{REA}_{\bigcup}$ of the previous section section can be viewed as constructive analogs of regular cardinals (see~\cite{rl03,czf2}).\footnote{$\mathbf{ZF}$ alone, though, cannot prove the existence of regular sets containing $\omega$. This follows from~\cite[Corollary 7.1]{rl03}.}
Further strengthenings of  the notion of regularity lead to weakly inaccessible, inaccessible and Mahlo sets (see~\cite{rgp98,cr02} and~\cite{czf2} chapter 18).\footnote{The terminology
varies between different papers. Inaccessible sets are called set-inaccessible in~\cite{rgp98} while weakly inaccessible sets are called inaccessible in~\cite{cr02,czf,czf2}. The terminology in the present paper is the same as used in~\cite{fefrat}. On the basis of $\mathbf{ZFC}$, though, the notions coincide.} 
In a constructive environment such as $\CZF$, however, the existence of such sets does not entail the enormous proof-theoretic strength they engender  in the classical context.
Indeed, the existence of weakly inaccessible sets does not add more strength to $\CZF+\mathbf{REA}$. In more detail, if one adds to $\CZF$ the axiom $\wINAC$ asserting that every set is contained in a weakly inaccessible set then $\CZF+\wINAC$ possesses  a recursive realizability interpretation in $\CZF+\mathbf{REA}$. This follows basically from~\cite[Theorem 4.7]{antirat}, when one changes the interpreting theory classical theory $\mathbf{KPi} $ therein to the theory $\CZF+\mathbf{REA}$ by deploying a constructivization of the techniques of~\cite{R93,RG94} (details will be provided after Definition~\ref{mlsr}).   Furthermore, there is a close connection between superuniverses in type theory (see~\cite{suppal,rgp98,ra00,suprat1,suprat2}) and the axiom $\wINAC$
in set theory.

\begin{defi}\label{letzt}
We say that a set is \emph{weakly inaccessible} if it is 
regular and a model of $\CZF$. A set is {\bf inaccessible} if it is regular and a model of $\CZF+\mathbf{REA}$. 
\end{defi}

The formalization  of the notion of  inaccessibility in Definition~\ref{letzt}
is somewhat  syntactic  in that it requires a
satisfaction predicate for formulae interpreted over a set. An alternative and more
`algebraic' characterization can be given as follows.
\begin{defi}Let $\Omega :=\{x:\;x\subseteq\{0\}\}$. $\Omega$ is the class of
truth values with $0$ representing falsity and $1=\{0\}$ representing truth.
Classically one has $\Omega=\{0,1\}$ but intuitionistically one cannot conclude
that those are the only truth values.

For $a\subseteq\Omega$ define 
\begin{eqnarray*} \bigwedge a &=& \{x\MRinn 1:\;(\forall u\MRinn a)x\MRinn
u\}.\end{eqnarray*}  
A class $B$ is \emph{$\bigwedge$-closed} if for all $a\MRinn B$, whenever $a\subseteq
\Omega$, then $\bigwedge a\in B$.

 For sets $a,b$ let $\MRhoch{a}{b}$ be the class of all 
functions with domain
$a$ and with range contained in $b$.
Let $\MRvoll{a}{b}$ be the class of all  sets $r\subseteq a\times b$ 
satisfying $\forall u\MRinn a\,\exists v\MRinn b\,\MRpaar{u,v}\MRinn r$.
A set $c$ is said to be \emph{full in} $\MRvoll{a}{b}$ if $c\subseteq \MRvoll{a}{b}$
  and \[\forall r\MRinn\MRvoll{a}{b}\,\exists s\MRinn c\, s\subseteq r.\]

The expression $\MRvoll{a}{b}$ should be read as the collection of
\emph{multi-valued functions} from  $a$ to  $b$.
\end{defi}
\begin{propC}[($\CZF$)] A set $I$ is weakly inaccessible if and only if the 
following are satisfied:
\begin{enumerate}
\item $I$ is a regular set,
\item $\omega\MRinn I$,
\item $(\forall a\MRinn I)\bigcup a\MRinn I$,
\item $I$ is $\bigwedge$-closed,
\item $(\forall a,b\in I)(\exists c\MRinn I)\,\bigl[\mbox{$c$ is full in $\MRvoll{a}{b}$}
\bigr]$.
\end{enumerate} 
\end{propC}
\begin{proof} See~\cite{ra00}, Proposition 3.4.
\end{proof}

\begin{rem}\label{rem} Note that $\MRCZF+\mathbf{REA}_{\bigcup}$ is a subtheory of  $\MRCZF+\wINACC$. \end{rem}

Viewed classically and in the presence of the axiom of choice,  weakly inaccessible sets give rise to strongly inaccessible cardinals, i.e., regular cardinals $\kappa>\omega$ such that 
$2^{\rho}<\kappa$ forall $\rho<\kappa$.

Let $V_{\alpha}$ denote the $\alpha$th level of the von Neumann hierarchy. 

\begin{propC}[($\MRZFC$)] If $I$ is a weakly inaccessible set then
$I=V_{\kappa}$ for some strongly inaccessible cardinal $\kappa$.\end{propC}
\begin{proof} This is a consequence of the proof of~\cite{rgp98}, Corollary 2.7.
\end{proof}

The next  result from~\cite{antirat} shows that the strength of $\wINACC$ is quite modest when
based on constructive set theory.

\begin{thm} $\MRCZF+\MRREA$ and $\MRCZF+\wINACC$ have the same proof-theoretic strength
as the subsystem of second order arithmetic with
${\MRboldsymbol\Delta}^1_2$-comprehension and bar induction.
\end{thm}
\begin{proof}~\cite{antirat} Theorem 4.7.\end{proof}
Indeed,~\cite{antirat} basically furnishes interpretations between  $\MRCZF+\MRREA$ and $\MRCZF+\wINACC$ and a version of type theory $\MRMLS$ with a superuniverse as we shall argue below.

\begin{defi}\label{mlsr}
 The type theory $\MRMLS$ has the
following ingredients:\footnote{Regarding the exact formalization of a
superuniverse $\MRmathbb S$ and the universe operator $\mathbf U$ see~\cite{ra00}.} 
\begin{itemize}
\item $\MRMLS$ demands closure
 under the usual
type constructors $\Pi$, $\Sigma$, $+$, 
$I$, ${\MRmathbb N}$, ${\MRmathbb N}_0$, ${\MRmathbb N}_1$ (but not the
$W$-type).
\item $\MRMLS$ has a superuniverse $\MRmathbb S$ which is closed under
$\Pi$, $\Sigma$, $+$, $I$, ${\MRmathbb N}$,
${\MRmathbb N}_0$, ${\MRmathbb N}_1$ and the $W$-type and the universe
operator $\mathbf U$.
\item $\MRMLS$  has a type $\mathbf V$ of iterative sets over $\MRmathbb S$ (see~\cite{aczel78}).
\item The universe operator $\mathbf U$ takes a type $A$ in $\MRmathbb S$ and a family
of types $B:A\rightarrow {\MRmathbb S}$ and produces a universe ${\mathbf U}(A,B)$ in
$\mathbb S$ which contains $A$ and $B(x)$ for all $x\in A$, and is closed under
$\Pi$, $\Sigma$, $+$, $I$, ${\MRmathbb N}$, ${\MRmathbb N}_0$, ${\MRmathbb N}_1$ (but not the
$W$-type).\end{itemize}
\end{defi}

The type $\mathbf V$ then enables one to perform an  interpretation of $\MRCZF+\wINACC+\mathbf{RDC}$ in $\MRMLS$
\`{a} la Aczel (see~\cite{aczel78,aczel82,aczel86}), using the techniques of~\cite{rgp98} section 4. The universe operator is crucial 
for modelling the weakly inaccessible sets in the manner of~\cite{rgp98} section 4. Note that $\mathbf{RDC}$ is modelled by this interpretation, too, as follows from~\cite{aczel82}.

Conversely, 
$\MRMLS$ has a recursive realizability interpretation in  $\MRCZF+\mathbf{REA}$. This uses a constructive modification of the techniques of 
~\cite{R93,RG94} sections 4 and 5. Although the latter articles give interpretations in versions of classical Kripke-Platek set theories ($\mathbf{KP}$ and $\mathbf{KPi}$) the interpretations
are constructively valid once one jettisons the unnecessary trichotomy of dividing ordinals into 0, successor and limit cases, as it was done in our previous paper~\cite{mmr21} in section 4.
The recursive realizability interpretation of the universe operator follows in the same way as in Definition 4.12,  Theorem 4.13 and Lemma 5.7 of~\cite{R93,RG94}.
Instead of using an admissible set as in~\cite{R93,RG94} Lemma 5.7 to show that the inductive definition of a universe gives rise to a set one employs a regular set $A$ containing all the parameters and uses recursion on the ordinals in the rank of $A$ to build up the set that models the universe.

So the upshot of the foregoing is the following result, mainly extracted from~\cite{antirat}.

\begin{thmC}[~\cite{antirat}]\label{Haupt1} The theories $\CZF+\mathbf{REA}$, $\CZF+\wINACC+\mathbf{RDC}$ and $\MRMLS$ are of the same strength. More precisely, they can be mutually interpreted in each other via the 
above interpretations. \end{thmC}

The type  $\mathbf V$ plays a crucial role in the interpretation of set theory in type theory  \`{a} la Aczel. However, it doesn't add any proof-theoretic strength.
This phenomenon was first observed in~\cite{R93,RG94} section 6. Let $\mathbb{MLS}$ be $\MRMLS$ without the type $\mathbf V$. Since the interpretation of the theory $\mathbf{IRA}$  of~\cite{R93,RG94} Definition 6.5 is also an interpretation in $\mathbb{MLS}$ we have the following result by~\cite{R93,RG94} Theorem 6.13.

\begin{thm}\label{Haupt2} The theories $\CZF+\mathbf{REA}$, $\CZF+\wINACC$, $\mathbb{MLS}$  and $\MRMLS$ are of the same proof-theoretic strength. 
\end{thm}
Finally, let us address the issue of adding Church's thesis, $\mathbf{CT}$, to various type theories. Of course, for this the $\xi$-rule has to be dropped  in the same way as in
~\cite{IMMS} and~\cite{mmr21} as explained in Definition~\ref{xixi}.  So, in future, if we write  $\mathbf{ML}+\mathbf{CT}$ where $\mathbf{ML}$ is a system of type theory, it is understood that this is type theory without the $\xi$-rule.

From Theorems~\ref{Haupt1} and~\ref{Haupt2} (or rather their proofs) it follows that the modified type theories with $\mathbf{CT}$ are consistent and, moreover, that they have the same strength as their cousins with the $\xi$-rule. The main reason for the type theories with $\mathbf{CT}$ being consistent is that we use recursive realizability interpretations for the type theories as in~\cite{mmr21}. On the other hand, the reason for the fact that the strength does not drop is that for the interpretation of set theory (or $\mathbf{IRA}$) in type theory  the $\xi$-rule doesn't matter at all. 

Therefore we conclude:
\begin{thm}\label{pts2} The following theories share the same proof-theoretic strength.
\begin{enumerate}
\item  $\CZF+\mathbf{REA}$
\item $\CZF+\wINACC+\mathbf{RDC}$ 
\item $\mathbb{MLS}$  
\item  $\MRMLS$ 
\item $\mathbb{MLS}+\mathbf{CT}$
\item  $\MRMLS+\mathbf{CT}$ 
\end{enumerate}
\end{thm}


As a consequence of Proposition~\ref{inmtt}, Theorem~\ref{real}, Remark~\ref{rem} and Theorem~\ref{Haupt2} we obtain again the following result.
\begin{cor}\label{main2}
        Consistency of the theory $\mathbf{CZF}+\mathbf{REA}$  implies the consistency of the  theory \mttcind\ extended with the formal Church thesis $\ct$ and axiom of choice $\ac$.
        \end{cor}

\begin{rem}\em
In \mttcind\ we added coinductive predicates  by inserting primitive proof-terms in an axiomatic way justified by the fact that such predicates become proof-irrelevant within
 \emttcind.  Indeed we are not aware of  explicit well-behaved rules for coinductive types to be added to Martin-L{\"o}f's type theory or to the Calculus of Constructions  (as mentioned in~\cite{failure}) and hence 
 to the intensional level of \mf.

On the other hand, current encodings of coinductive types like those in~\cite{coinMLTT, coinHott} are perfomed within versions of type theory validating extensional
equality of functions which can not be   validated in our Kleene realizability interpretation for  its inconsistency
with $ \ac+\ct$.  

What we can currently say  is that coinductive types in~\cite{coinMLTT}  are consistent  with Formal Church thesis $\ct$  over the extensional level $\emtt$ of \mf\  extended with well founded trees, called $W$-sets
in \mf-terminology.
Indeed we can encode coinductive types, or better  coinductive sets  in \mf-terminology, following~\cite{coinMLTT} within $\emtt+ \mbox{\bf $W$-sets}$  because this theory validates the assumptions
needed for the enconding in~\cite{coinMLTT}  due to the fact that  \emtt\ extends the first order version of  extensional Martin-Loef's type theory and validates  $0 \neq 1$   as shown in~\cite{m09}.
Moreover,  a two-level extension of \mf\  extended with $W$-sets  can  be provided as described in~\cite{MFwtypes}.
Then,   a Kleene realizability interpretation for $\mtt \ +\ \mbox{ \bf $W$-sets } + \ac +\ct$ can be built  as that for \mttind\  in~\cite{mmr21}, see remark~\ref{Wtypes}.
By composing such an  interpretation with the interpretation of \emtt\ in \mtt\  in~\cite{m09} extended to \mbox{ \bf $W$-sets } in~\cite{MFwtypes}, we obtain a realizability interpretation for  $\emtt + \ \mbox{ \bf W-sets } + \ct$, which
guarantees  that the encoded coinductive sets are consistent with $\ct$. Such an interpretation does not validate  $\ac$ since $\ac$  is constructively incompatible with \emtt\ as shown in~\cite{m09}.
\end{rem}

 \subsection*{Conclusions} We  have shown that  the intensional level  \mttcind\  of the extension \mfcind\ of the Minimalist Foundation \mf\  in~\cite{m09} is   consistent with $\ac+ \ct$ in two different ways.
In both ways we  show this by extending Kleene realizability interpretation of intuitionistic arithmetics in a constructive theory  whose consistency strength is that  of  {\bf CZF +REA}.  A  key benefit of the first way   is that  the intermediate theory $\mathbf{CZF}+\mathbf{REA}_{\bigcup}+\mathbf{RDC}$ also supports the  intended set-theoretic interpretation of the extensional level \emttcind\ of \mfcind.
 
%
 This work lets us conclude that the  extension \mfcind\  of   \mf\  with all the inductive and coinductive methods  developed in the field of Formal Topology
 constitutes a two-level foundation in the sense of~\cite{mtt}. 
 Moreover,   we confirmed  the expectation that the  addition of   coinductive topological definitions to \mttind\  to form \mttcind\  does not  increase its  consistency strength.

 Finally, all the theories used  to reach our goal, except  \mttcind\ and \mttind,  have shown  of the same proof-theoretic strength. 

  We leave it to future work to establish    the consistency strength  of \mttcind\ and   \mttind\
given that it is still an open problem to establish that of \mtt\ itself.

Another future goal would be to apply   the  realizability interpretations presented here  to build  predicative variants of Hyland's Effective Topos as in~\cite{misam21} but in a constructive meta-theory
such as  {\bf CZF +REA}.

\subsection*{Acknowledgments}The first author acknowledges very helpful discussions and suggestions with  U. Berger, F. Ciraulo, M. Contente, P. Martin-L{\"o}f, C. Sacerdoti Coen, G. Sambin and T. Streicher. The third author was supported by a grant from the John Templeton Foundation
(``A new dawn of intuitionism: mathematical and philosophical advances,'' ID 60842). 
                      
         \bibliographystyle{alphaurl}  		
\bibliography{bibliopspmich}

\begin{thebibliography}{{R}at00b}

\bibitem[AAG05]{coinMLTT}
M.~G. Abbott, T.~Altenkirch, and N.~Ghani.
\newblock Containers: Constructing strictly positive types.
\newblock {\em Theoretical Computer Science}, 342(1):3--27, 2005.

\bibitem[Acz78]{aczel78}
P.~Aczel.
\newblock The type theoretic interpretation of constructive set theory.
\newblock In A.~MacIntyre, L.~Pacholski, and J.~Paris, editors, {\em Logic
  Colloquium `77}, pages 55--66. North Holland, 1978.

\bibitem[Acz82]{aczel82}
P.~Aczel.
\newblock The type theoretic interpretation of constructive set theory: Choice
  principles.
\newblock In A.S. Troelstra and D.~van Dalen, editors, {\em The {L.E.J. Brouwer
  Centenary}}, pages 1--40. North Holland, 1982.

\bibitem[Acz86]{aczel86}
P.~Aczel.
\newblock The type theoretic interpretation of constructive set theory:
  Inductive definitions.
\newblock In R.B.~{Marcus} et~al., editor, {\em Logic, Methodology and
  Philosophy of Science VII}, pages 17--49. North Holland, 1986.

\bibitem[{Ac}z88]{aczel88}
P.~{Ac}zel.
\newblock {\em Non-well-founded sets}, volume~14 of {\em CSLI Lecture Notes}.
\newblock Center for the study of language and information, 1988.

\bibitem[AR01]{czf}
P.~{Aczel} and M.~{Rathjen}.
\newblock Notes on constructive set theory.
\newblock Mittag-Leffler Technical Report No.40, 2001.

\bibitem[AR10]{czf2}
P.~{Aczel} and M.~{Rathjen}.
\newblock Constructive set theory.
\newblock Available at \url{http://www1.maths.leeds.ac.uk/~rathjen/book.pdf},
  2010.

\bibitem[BCS15]{coinHott}
B.Ahrens, P.~Capriotti, and R.~Spadotti.
\newblock Non-wellfounded trees in homotopy type theory.
\newblock In Thorsten Altenkirch, editor, {\em 13th International Conference on
  Typed Lambda Calculi and Applications, {TLCA} 2015, July 1-3, 2015, Warsaw,
  Poland}, volume~38 of {\em LIPIcs}, pages 17--30. Schloss Dagstuhl -
  Leibniz-Zentrum f{\"{u}}r Informatik, 2015.

\bibitem[BG16]{failure}
H.~Basold and H.~Geuvers.
\newblock Type theory based on dependent inductive and coinductive types.
\newblock In Martin Grohe, Eric Koskinen, and Natarajan Shankar, editors, {\em
  Proceedings of the 31st Annual {ACM/IEEE} Symposium on Logic in Computer
  Science, {LICS} '16, New York, NY, USA, July 5-8, 2016}, pages 327--336.
  {ACM}, 2016.

\bibitem[Bre15]{MFwtypes}
Luca Bressan.
\newblock An extension of the {M}inimalist {F}oundation with well founded
  trees.
\newblock Master's thesis, Dep. of Mathematics, University of Padova,
  supervised by M.E. Maietti, 2015.

\bibitem[BS06]{batsampre}
G.~Battilotti and G.~Sambin.
\newblock Pretopologies and a uniform presentation of sup-lattices, quantales
  and frames.
\newblock In {\em Special Issue: Papers presented at the 2nd Workshop on Formal
  Topology (2WFTop 2002)}, volume 137 of {\em Annals of Pure and Applied
  Logic}, pages 30--61, (2006).

\bibitem[BT21]{BT21}
U.~Berger and H.~Tsuiki.
\newblock Intuitionistic fixed point logic.
\newblock {\em Annals of Pure and Applied Logic}, 172(3):102903, 56, 2021.

\bibitem[CMS13]{cms13}
F.~Ciraulo, M.E. Maietti, and G.~Sambin.
\newblock Convergence in formal topology: a unifying notion.
\newblock {\em Journal of Logic and Analysis}, 5, 2013.

\bibitem[CR02]{cr02}
L.~Crosilla and M.~Rathjen.
\newblock Inaccessible set axioms may have little consistency strength.
\newblock {\em Annals of Pure and Applied Logic}, 115:33--70, (2002.

\bibitem[CS18]{emb}
F.~Ciraulo and G.~Sambin.
\newblock Embedding locales and formal topologies into positive topologies.
\newblock {\em Archive for Mathematical Logic}, 57(7-8):755--768, 2018.

\bibitem[CSSV03]{CSSV03}
T.~{Coquand}, G.~{Sambin}, J.~{Smith}, and S.~{Valentini}.
\newblock Inductively generated formal topologies.
\newblock {\em Annals of Pure and Applied Logic}, 124(1-3):71--106, 2003.

\bibitem[Cur18]{curicind}
G.~Curi.
\newblock Abstract inductive and co-inductive definitions.
\newblock {\em The Journal of Symbolic Logic}, 83(2):598--616, 2018.

\bibitem[GR94]{RG94}
E.~{Griffor} and M.~{Rathjen}.
\newblock The strength of some {M}artin-{L}\"{o}f type theories.
\newblock {\em Archive for Mathematical Logic}, 33(5):347--385, 1994.

\bibitem[IMMS18]{IMMS}
H.~Ishihara, M.E. Maietti, S.~Maschio, and T.~Streicher.
\newblock Consistency of the intensional level of the {M}inimalist {F}oundation
  with {C}hurch's thesis and axiom of choice.
\newblock {\em Archive for Mathematical Logic}, 57(7-8):873--888, 2018.

\bibitem[KS19]{cont}
T.~Kawai and G.~Sambin.
\newblock The principle of pointfree continuity.
\newblock {\em Logical Methods in Computer Science}, 15(1), 2019.

\bibitem[Lin89]{LIN89}
I.~Lindstr\"{o}m.
\newblock A construction of non-well-founded sets within {M}artin-{L}\"{o}f's
  type theory.
\newblock {\em The Journal of Symbolic Logic}, 54(1):57--64, 1989.

\bibitem[{Mai}09]{m09}
M.E. {Maietti}.
\newblock A minimalist two-level foundation for constructive mathematics.
\newblock {\em Annals of Pure and Applied Logic}, 160(3):319--354, 2009.

\bibitem[MM16]{ms16}
M.E. Maietti and S.~Maschio.
\newblock A predicative variant of a realizability tripos for the {M}inimalist
  {F}oundation.
\newblock {\em IfColog Journal of Logics and their Applications},
  3(4):595--668, 2016.
\newblock URL:
  \url{http://www.collegepublications.co.uk/downloads/ifcolog00008.pdf}.

\bibitem[MM21]{misam21}
M.E. Maietti and S.~Maschio.
\newblock A predicative variant of {H}yland\textquoteright{}s {E}ffective
  {T}opos.
\newblock {\em J. Symb. Log.}, 86(2):433--447, 2021.

\bibitem[MMR21]{mmr21}
M.E. Maietti, S.~Maschio, and M.~Rathjen.
\newblock A realizability semantics for inductive formal topologies, {C}hurch's
  {T}hesis and {A}xiom of {C}hoice.
\newblock {\em Logical Methods in Computer Science}, 17(2), 2021.

\bibitem[MR13a]{elqu}
M.E. Maietti and G.~Rosolini.
\newblock Elementary quotient completion.
\newblock {\em Theory and Applications of Categories}, 27(17):445--463, 2013.

\bibitem[MR13b]{qu12}
M.E. {Maietti} and G.~{Rosolini}.
\newblock Quotient completion for the foundation of constructive mathematics.
\newblock {\em Logica Universalis}, 7(3):371--402, 2013.

\bibitem[MR15]{uxc}
M.E. Maietti and G.~Rosolini.
\newblock Unifying exact completions.
\newblock {\em Applied Categorical Structures}, 23(1):43--52, 2015.

\bibitem[MS05]{mtt}
M.E. {Maietti} and G.~{Sambin}.
\newblock {Toward a minimalist foundation for constructive mathematics}.
\newblock In {L. Crosilla and P. Schuster}, editor, {\em From Sets and Types to
  Topology and Analysis: Practicable Foundations for Constructive Mathematics},
  number~48 in {Oxford Logic Guides}, pages 91--114. {Oxford University Press},
  2005.

\bibitem[MV04]{mv04}
M.E. {Maietti} and S.~{Valentini}.
\newblock A structural investigation on formal topology: coreflection of formal
  covers and exponentiability.
\newblock {\em The Journal of Symbolic Logic}, 69:967--1005, 2004.

\bibitem[NPS90]{PMTT}
B.~{N}ordstr{\"o}m, K.~{P}etersson, and J.~{S}mith.
\newblock {\em Programming in Martin L{\"o}f's Type Theory.}
\newblock Clarendon Press, Oxford, 1990.

\bibitem[Pal98]{suppal}
E.~Palmgren.
\newblock {\em On universes in type theory}, page 191 – 204.
\newblock Oxford Logic Guides. Oxford University Press, 1998.

\bibitem[{Ra}t93]{R93}
M.~{Ra}thjen.
\newblock The strength of some {M}artin-{L}\"{o}f type theories.
\newblock Preprint, Department of Mathematics, Ohio State University,available
  at \url{http://www1.maths.leeds.ac.uk/~rathjen/typeOHIO.pdf}, 1993.

\bibitem[Rat00a]{suprat1}
M.~Rathjen.
\newblock The strength of {M}artin-{L}{\"{o}}f type theory with a
  superuniverse. part {I}.
\newblock {\em Archive for Mathematical Logic}, 39(1):1--39, 2000.

\bibitem[{R}at00b]{ra00}
M.~{R}athjen.
\newblock The superjump in {M}artin-{L}\"of type theory.
\newblock In P.~Pudlak S.~Buss, P.~Hajek, editor, {\em Logic Colloquium '98},
  volume~13 of {\em Lecture Notes in Logic}, pages 363--386. Association for
  Symbolic Logic, 2000.

\bibitem[Rat01]{suprat2}
M.~Rathjen.
\newblock The strength of {M}artin-{L}{\"{o}}f type theory with a
  superuniverse. part {II}.
\newblock {\em Archive for Mathematical Logic}, 40(3):207--233, 2001.

\bibitem[Rat03]{antirat}
M.~Rathjen.
\newblock The anti-foundation axiom in constructive set theories.
\newblock In G.~Mints and R.~Muskens, editors, {\em Games, Logic, and
  Constructive Sets}, pages 87--108. CSLI Publications Stanford, 2003.

\bibitem[Rat04]{circrat}
M.~Rathjen.
\newblock Predicativity, circularity, and anti-foundation.
\newblock In G.~{Link}, editor, {\em One hundred years of {R}ussell's paradox},
  volume~6 of {\em de {G}ruyter {S}eries in {L}ogic and its {A}pplications},
  pages 191--219. de Gruyter, 2004.

\bibitem[Rat06]{rathjen2006b}
M.~Rathjen.
\newblock Realizability for {C}onstructive {Z}ermelo-{F}raenkel set theory.
\newblock In J.~V\"{a}\"{a}n\"{a}nen and V.~Stoltenberg-Hansen, editors, {\em
  Logic {C}olloquium `03}, volume~24 of {\em Lecture Notes in Logic}, pages
  228--314. A.K. Peters, Wellesley, 2006.

\bibitem[Rat17]{fefrat}
M.~Rathjen.
\newblock Proof theory of constructive systems: Inductive types and univalence.
\newblock In G.~J\"ager and W.~Sieg, editors, {\em Feferman on foundations},
  volume~13 of {\em Outstanding {C}ontributions to {L}ogic}, pages 385--419.
  Springer, 2017.

\bibitem[RGP98]{rgp98}
M.~Rathjen, E.~Griffor, and E.~Palmgren.
\newblock Inaccessibility in constructive set theory and type theory.
\newblock {\em Annals of Pure and Applied Logic}, 94:181--200, 1998.

\bibitem[RL03]{rl03}
M.~Rathjen and R.~Lubarsky.
\newblock On the regular extension axiom and its variants.
\newblock {\em Mathematical Logic Quarterly}, 49(5):1--8, 2003.

\bibitem[{Sam}87]{S87}
G.~{Sambin}.
\newblock Intuitionistic formal spaces - a first communication.
\newblock {\em Mathematical logic and its applications}, pages 187--204, 1987.

\bibitem[{Sam}03]{somepoints}
G.~{Sambin}.
\newblock Some points in formal topology.
\newblock {\em Theoretical Computer Science}, 305:347--408, 2003.

\bibitem[Sam19]{postop}
G.~Sambin.
\newblock Dynamics in foundations: what does it mean in the practice of
  mathematics?
\newblock In Stefania {Centrone}, Deborah {Kant}, and Deniz {Sarikaya},
  editors, {\em Reflections on the Foundations of Mathematics. Univalent
  Foundations, Set Theory and General Thoughts}, volume 407 of {\em Synthese
  Library. Studies in Epistemology, Logic, Methodology, and Philosophy of
  Science}, pages 455--494. Springer, 2019.

\bibitem[Tv88]{DT88}
A.~S. {Troelstra} and D.~{van Dalen}.
\newblock Constructivism in mathematics, an introduction, vol. {I} and {II}.
\newblock In {\em Studies in logic and the foundations of mathematics}.
  North-Holland, 1988.

\bibitem[vdBM07]{VDJSL}
B.~van~den Berg and F.~De Marchi.
\newblock Models of non-well-founded sets via an indexed final coalgebra
  theorem.
\newblock {\em The Journal of Symbolic Logic}, 72(3):767--791, 2007.

\bibitem[Vic07]{subloc}
S.~Vickers.
\newblock Sublocales in formal topology.
\newblock {\em The Journal of Symbolic Logic}, 72(2):463--482, 2007.

\end{thebibliography}
\end{document}